\providecommand{\U}[1]{\protect \rule{.1in}{.1in}}
\newtheorem{theorem}{Theorem}
\newtheorem{corollary}[theorem]{Corollary}
\newtheorem{lemma}[theorem]{Lemma}
\newtheorem{proposition}[theorem]{Proposition}
\newtheorem{remark}[theorem]{Remark}
\newenvironment{proof}[1][Proof]{\noindent \textbf{#1.} }{\  \rule{0.5em}{0.5em}}
\begin{document}

\begin{center}
{\LARGE On a class of polynomials connected to Bell polynomials}

\  \  \  \  \ 

MILOUD MIHOUBI\footnote[1]{%
This work is supported by RECITS Laboratory of USTHB.} AND MADJID\ SAHARI%
\footnotemark[2]\medskip \\[0pt]
USTHB, Faculty of Mathematics, BP 32, El-Alia, 16111, Algiers, Algeria. \\[%
0pt]
{\Large {\large \footnotemark[1]}}mmihoubi@usthb.dz \ or {\Large {\large 
\footnotemark[1]}}miloudmihoubi@gmail.com\\[0pt]
{\Large {\large \footnotemark[2]}}msahari@usthb.dz \ or \ {\Large {\large 
\footnotemark[2]}}madjid\_sahari@yahoo.fr

\  \  \  \ 
\end{center}

\noindent \textbf{Abstract. }In this paper, we study a class of sequences of
polynomials linked to the sequence of Bell polynomials. Some sequences of
this class have applications on the theory of hyperbolic differential
equations and other sequences generalize Laguerre polynomials and associated
Lah polynomials. We discuss, for these polynomials, their explicit
expressions, relations to the successive derivatives of a given function,
real zeros and recurrence relations. Some known results are significantly
simplified.

\noindent \textbf{Keywords.} New class of polynomials, recurrence relations,
real zeros, Bell polynomials, Laguerre polynomials, associated Lah
polynomials.

\noindent Mathematics Subject Classification 2010: 11B83; 41A99; 65Q30;
35L10.

\section{Introduction}

In \cite{erd}, there are many polynomials having applications to the
hyperbolic partial differential equations%
\begin{equation*}
Au_{xx}+2Bu_{xy}+Cu_{yy}+Du_{x}+Eu_{y}+F=0\text{\ with }AD>BC,
\end{equation*}%
for which the following two sequences of polynomials $\left( U_{n}\left(
x\right) \right) $ and $\left( V_{n}\left( x\right) \right) $ defined by%
\begin{eqnarray*}
\underset{n\geq 0}{\sum }U_{n}\left( x\right) \frac{t^{n}}{n!} &=&\left(
1-t\right) ^{-1/2}\exp \left( x\left( \left( 1-t\right) ^{-1/2}-1\right)
\right) , \\
\underset{n\geq 0}{\sum }V_{n}\left( x\right) \frac{t^{n}}{n!} &=&\left(
1-t\right) ^{-3/2}\exp \left( x\left( \left( 1-t\right) ^{-1/2}-1\right)
\right)
\end{eqnarray*}%
are considered. These polynomials have applications to the theory of
hyperbolic partial differential equations, see \cite[pp. 391--398]{cou}.
They can be expressed as follows \cite[pp. 257--258]{erd}:%
\begin{eqnarray*}
U_{n}\left( x\right) &=&xe^{-x}\left( \frac{d}{d\left( x^{2}\right) }\right)
^{n}\left( x^{2n-1}e^{x}\right) , \\
V_{n}\left( x\right) &=&\frac{e^{-x}}{x}\left( \frac{d}{d\left( x^{2}\right) 
}\right) ^{n}\left( x^{2n+1}e^{x}\right) .
\end{eqnarray*}%
Recently, two studies of the sequence of polynomials $\left( U_{n}\left(
x\right) \right) $ are given in \cite{kim,qi}.\newline
Motivated by these works, to give more properties of these polynomials, we
prefer to consider their generalized sequence of polynomials $\left(
P_{n}^{\left( \alpha ,\beta \right) }\left( x\right) \right) $ defined by%
\begin{equation*}
\underset{n\geq 0}{\sum }P_{n}^{\left( \alpha ,\beta \right) }\left(
x\right) \frac{t^{n}}{n!}=\left( 1-t\right) ^{\alpha }\exp \left( x\left(
\left( 1-t\right) ^{\beta }-1\right) \right) ,\  \  \alpha ,\beta \in \mathbb{R%
},\  \beta \neq 0..
\end{equation*}%
The first few values of the sequence $\left( P_{n}^{\left( \alpha ,\beta
\right) }\left( -\frac{x}{\beta }\right) ;n\geq 0\right) $ are to be%
\begin{eqnarray*}
P_{0}^{\left( \alpha ,\beta \right) }\left( -\frac{x}{\beta }\right) &=&1, \\
P_{1}^{\left( \alpha ,\beta \right) }\left( -\frac{x}{\beta }\right)
&=&x-\alpha , \\
P_{2}^{\left( \alpha ,\beta \right) }\left( -\frac{x}{\beta }\right)
&=&x^{2}-\left( 2\alpha +\beta -1\right) x+\left( \alpha \right) _{2}, \\
P_{3}^{\left( \alpha ,\beta \right) }\left( -\frac{x}{\beta }\right)
&=&x^{3}-3\left( \alpha +\beta -1\right) x^{2}+\left( 3\alpha ^{2}+3\alpha
\beta -6\alpha +\beta ^{2}-3\beta +2\right) x-\left( \alpha \right) _{3},
\end{eqnarray*}%
where $\left( \alpha \right) _{n}:=\alpha \left( \alpha -1\right) \cdots
\left( \alpha -n+1\right) $ \ if \ $n\geq 1$ \ and \ $\left( \alpha \right)
_{0}:=1.$\newline
We use also the notation $\left \langle \alpha \right \rangle _{n}:=\alpha
\left( \alpha +1\right) \cdots \left( \alpha +n-1\right) $ \ if \ $n\geq 1$
\ and \ $\left \langle \alpha \right \rangle _{0}:=1.$\newline
The paper is organized as follows. In the next section we give different
expressions for $P_{n}^{\left( \alpha ,\beta \right) }\left( x\right) .$ In
the third section we give special expressions for $P_{n}^{\left( \alpha
,\beta \right) }\left( x\right) $ and we show that it has only real zeros
under certain conditions on $\alpha $\ and $\beta .$ In the fourth section,
we give differential equations arising from of the polynomials $%
P_{n}^{\left( \alpha ,\beta \right) }$ and some recurrence relations, and,
in the last section apply the obtained results to some particular
polynomials.

\section{Explicit expressions for the polynomials $P_{n}^{\left( \protect%
\alpha ,\protect \beta \right) }$}

In this section, we give some explicit expressions for $P_{n}^{\left( \alpha
,\beta \right) }\left( x\right) .$ Two expressions of $P_{n}^{\left( \alpha
,\beta \right) }\left( x\right) $ related to Dobinski's formula and
generalized Stirling numbers are given by the following proposition.

\begin{proposition}
\label{P1}There hold%
\begin{eqnarray*}
P_{n}^{\left( \alpha ,\beta \right) }\left( x\right) &=&e^{-x}\underset{%
k\geq 0}{\sum }\left \langle -\alpha -\beta k\right \rangle _{n}\frac{x^{k}}{%
k!} \\
P_{n}^{\left( \alpha ,\beta \right) }\left( x\right) &=&\underset{k=0}{%
\overset{n}{\sum }}S_{\alpha ,\beta }\left( n,k\right) x^{k},
\end{eqnarray*}%
where 
\begin{equation*}
S_{\alpha ,\beta }\left( n,k\right) =\frac{1}{k!}\underset{j=0}{\overset{k}{%
\sum }}\left( -1\right) ^{k-j}\binom{k}{j}\left \langle -\alpha -\beta
j\right \rangle _{n}.
\end{equation*}
\end{proposition}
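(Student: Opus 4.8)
The plan is to read off $P_{n}^{\left( \alpha ,\beta \right) }\left( x\right) $ directly from its exponential generating function, by separately expanding the exponential factor and the binomial factor as power series in $t$ and then comparing coefficients of $t^{n}/n!$.

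First I would split off the constant term of the exponent: $\exp \!\left( x\left( \left( 1-t\right) ^{\beta }-1\right) \right) =e^{-x}\exp \!\left( x\left( 1-t\right) ^{\beta }\right) =e^{-x}\sum_{k\geq 0}\dfrac{x^{k}}{k!}\left( 1-t\right) ^{\beta k}$, so that for $|t|<1$
\[
\sum_{n\geq 0}P_{n}^{\left( \alpha ,\beta \right) }\left( x\right) \frac{t^{n}}{n!}=e^{-x}\sum_{k\geq 0}\frac{x^{k}}{k!}\left( 1-t\right) ^{\alpha +\beta k}.
\]
Then I would apply the generalized binomial series $\left( 1-t\right) ^{\gamma }=\sum_{n\geq 0}\binom{\gamma }{n}(-t)^{n}$ together with the identity $\binom{\gamma }{n}(-1)^{n}=\left\langle -\gamma \right\rangle _{n}/n!$, which yields $\left( 1-t\right) ^{\alpha +\beta k}=\sum_{n\geq 0}\left\langle -\alpha -\beta k\right\rangle _{n}\,t^{n}/n!$. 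Substituting this, interchanging the two summations (legitimate by absolute convergence for fixed $x$ and $|t|<1$), and equating coefficients of $t^{n}/n!$ produces the first identity $P_{n}^{\left( \alpha ,\beta \right) }\left( x\right) =e^{-x}\sum_{k\geq 0}\left\langle -\alpha -\beta k\right\rangle _{n}\,x^{k}/k!$.

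For the second identity I would argue as in Dobinski's formula. Put $Q(k):=\left\langle -\alpha -\beta k\right\rangle _{n}$; since $\beta \neq 0$, each factor $-\alpha -\beta k+i$ ($0\leq i\leq n-1$) is linear in $k$ with nonzero slope, so $Q$ is a polynomial in $k$ of degree exactly $n$. Expanding it in the binomial basis, $Q(k)=\sum_{m=0}^{n}(\Delta ^{m}Q)(0)\binom{k}{m}$ with $(\Delta ^{m}Q)(0)=\sum_{j=0}^{m}(-1)^{m-j}\binom{m}{j}Q(j)$, and combining with the elementary identity $e^{-x}\sum_{k\geq 0}\binom{k}{m}\dfrac{x^{k}}{k!}=\dfrac{x^{m}}{m!}$ (valid because $\binom{k}{m}=0$ for $k<m$ and $\sum_{k\geq m}x^{k-m}/(k-m)!=e^{x}$), one obtains
\[
e^{-x}\sum_{k\geq 0}Q(k)\frac{x^{k}}{k!}=\sum_{m=0}^{n}\frac{(\Delta ^{m}Q)(0)}{m!}\,x^{m}=\sum_{m=0}^{n}S_{\alpha ,\beta }\left( n,m\right) x^{m},
\]
which is the claimed formula and simultaneously shows that the middle member is genuinely a polynomial of degree at most $n$.

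The argument is essentially bookkeeping; the two places that warrant a line of justification are the interchange of the double sum over $k$ and $n$ in the first step, and, in the second step, the remark that $\deg _{k}Q=n$, so that the Newton expansion of $Q$ terminates at $m=n$ — which is exactly what pins down the upper summation limit in $\sum_{k=0}^{n}S_{\alpha ,\beta }\left( n,k\right) x^{k}$.
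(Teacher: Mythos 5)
Your proof is correct. The first identity is obtained exactly as in the paper: factor out $e^{-x}$, expand $\exp\left(x(1-t)^{\beta}\right)$ in powers of $x$, expand each $(1-t)^{\alpha+\beta k}$ via $\left(1-t\right)^{\gamma}=\sum_{n\geq 0}\left\langle -\gamma\right\rangle _{n}t^{n}/n!$, and swap the order of summation. For the second identity the paper simply says it ``follows from the first by expansion of $e^{-x}$ in power series,'' i.e.\ a Cauchy product of $\sum_{j}(-x)^{j}/j!$ with $\sum_{k}\left\langle -\alpha-\beta k\right\rangle _{n}x^{k}/k!$; your Newton forward-difference expansion of $Q(k)=\left\langle -\alpha-\beta k\right\rangle _{n}$ in the basis $\binom{k}{m}$ combined with $e^{-x}\sum_{k}\binom{k}{m}x^{k}/k!=x^{m}/m!$ is algebraically the same computation (the coefficient $\frac{1}{k!}\sum_{j}(-1)^{k-j}\binom{k}{j}Q(j)$ is precisely $(\Delta^{k}Q)(0)/k!$), just packaged differently. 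Your packaging buys one thing the paper leaves implicit: since $Q$ is a polynomial of degree exactly $n$ in $k$, its Newton series terminates at $m=n$, which is what justifies the upper limit in $\sum_{k=0}^{n}S_{\alpha,\beta}(n,k)x^{k}$ (equivalently, that $S_{\alpha,\beta}(n,k)=0$ for $k>n$). No gaps.
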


\begin{proof}
The first identity follows from the expansion%
\begin{eqnarray*}
\underset{n\geq 0}{\sum }P_{n}^{\left( \alpha ,\beta \right) }\left(
x\right) \frac{t^{n}}{n!} &=&e^{-x}\left( 1-t\right) ^{\alpha }\exp \left(
x\left( 1-t\right) ^{\beta }\right) \\
&=&e^{-x}\underset{k\geq 0}{\sum }x^{k}\frac{\left( 1-t\right) ^{\alpha
+\beta k}}{k!} \\
&=&e^{-x}\underset{k\geq 0}{\sum }\frac{x^{k}}{k!}\underset{n\geq 0}{\sum }%
\left \langle -\alpha -\beta k\right \rangle _{n}\frac{t^{n}}{n!} \\
&=&\underset{n\geq 0}{\sum }\left( e^{-x}\underset{k\geq 0}{\sum }\left
\langle -\alpha -\beta k\right \rangle _{n}\frac{x^{k}}{k!}\right) \frac{%
t^{n}}{n!},
\end{eqnarray*}%
and, the second identity follows from the first by expansion $e^{-x}$ in
power series.
\end{proof}

\begin{proposition}
\label{P5}There holds%
\begin{equation*}
x^{n}=\underset{k=0}{\overset{n}{\sum }}\widetilde{S}_{\alpha ,\beta }\left(
n,k\right) P_{k}^{\left( \alpha ,\beta \right) }\left( x\right) \  \  \text{%
with}\  \  \widetilde{S}_{\alpha ,\beta }\left( n,k\right) =\left( -1\right)
^{n-k}S_{-\frac{\alpha }{\beta },\frac{1}{\beta }}\left( n,k\right) .
\end{equation*}
\end{proposition}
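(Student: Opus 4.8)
The plan is to prove the identity by comparing exponential generating functions in an auxiliary variable $t$, keeping $x$ as a parameter. Multiplying the asserted formula by $t^n/n!$, summing over $n$, and interchanging the two summations, the claim becomes equivalent to
\begin{equation*}
e^{xt}=\sum_{n\geq 0}x^{n}\frac{t^{n}}{n!}=\sum_{k\geq 0}P_{k}^{\left( \alpha ,\beta \right) }\left( x\right) \sum_{n\geq k}\widetilde{S}_{\alpha ,\beta }\left( n,k\right) \frac{t^{n}}{n!}.
\end{equation*}
So the work reduces to computing the "vertical" generating function of $\widetilde{S}_{\alpha ,\beta }\left( n,k\right) $ and then resumming the right-hand side.

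First I would record, straight from the proof of Proposition \ref{P1} (expand $\exp \left( x\left( \left( 1-t\right) ^{b}-1\right) \right) $ in powers of $x$ and read off the coefficient of $x^{k}$), that for all real $a$ and $b\neq 0$
\begin{equation*}
\sum_{n\geq k}S_{a,b}\left( n,k\right) \frac{t^{n}}{n!}=\left( 1-t\right) ^{a}\,\frac{\left( \left( 1-t\right) ^{b}-1\right) ^{k}}{k!}.
\end{equation*}
Applying this with $a=-\alpha /\beta $, $b=1/\beta $, then replacing $t$ by $-t$ to absorb the sign $\left( -1\right) ^{n-k}$ (using $\left( -1\right) ^{n-k}t^{n}=\left( -1\right) ^{k}\left( -t\right) ^{n}$), and finally noting $\left( -1\right) ^{k}\left( \left( 1+t\right) ^{1/\beta }-1\right) ^{k}=\left( 1-\left( 1+t\right) ^{1/\beta }\right) ^{k}$, I obtain
\begin{equation*}
\sum_{n\geq k}\widetilde{S}_{\alpha ,\beta }\left( n,k\right) \frac{t^{n}}{n!}=\left( 1+t\right) ^{-\alpha /\beta }\,\frac{\left( 1-\left( 1+t\right) ^{1/\beta }\right) ^{k}}{k!}.
\end{equation*}

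Next I would substitute this into the right-hand side of the first display and recognize the $k$-sum. Writing $z=z\left( t\right) :=1-\left( 1+t\right) ^{1/\beta }$, a formal power series in $t$ with $z\left( 0\right) =0$, the $k$-sum is $\sum_{k\geq 0}P_{k}^{\left( \alpha ,\beta \right) }\left( x\right) z^{k}/k!$, i.e. the defining generating function of $P_{k}^{\left( \alpha ,\beta \right) }$ evaluated at $z$. Since $1-z=\left( 1+t\right) ^{1/\beta }$ we get $\left( 1-z\right) ^{\alpha }=\left( 1+t\right) ^{\alpha /\beta }$ and $\left( 1-z\right) ^{\beta }=1+t$, hence
\begin{equation*}
\sum_{k\geq 0}P_{k}^{\left( \alpha ,\beta \right) }\left( x\right) \frac{z^{k}}{k!}=\left( 1+t\right) ^{\alpha /\beta }\exp \left( x\left( \left( 1+t\right) -1\right) \right) =\left( 1+t\right) ^{\alpha /\beta }e^{xt}.
\end{equation*}
Multiplying by the prefactor $\left( 1+t\right) ^{-\alpha /\beta }$ leaves exactly $e^{xt}=\sum_{n\geq 0}x^{n}t^{n}/n!$, and comparing coefficients of $t^{n}/n!$ yields the proposition.

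The only point requiring care is the legitimacy of the manipulations: $\left( 1+t\right) ^{\pm \alpha /\beta }$ and $\left( 1+t\right) ^{1/\beta }$ are well-defined formal power series (binomial series), $z\left( t\right) $ has zero constant term so the substitution $s\mapsto z\left( t\right) $ into the formal series $\sum_{k}P_{k}^{\left( \alpha ,\beta \right) }\left( x\right) s^{k}/k!$ is admissible, and the rest is sign bookkeeping. Equivalently, one could phrase the argument as the assertion that $\left( S_{\alpha ,\beta }\left( n,k\right) \right) $ and $\left( \widetilde{S}_{\alpha ,\beta }\left( n,k\right) \right) $ are mutually inverse exponential Riordan arrays, the compositional inverse of $t\mapsto \left( 1-t\right) ^{\beta }-1$ being $t\mapsto 1-\left( 1+t\right) ^{1/\beta }$; but the direct generating-function computation above is the most transparent and is unlikely to present any real obstacle.
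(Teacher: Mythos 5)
Your proof is correct and follows essentially the same route as the paper's: both compute the vertical generating function $\sum_{n\geq k}S_{a,b}(n,k)\,t^{n}/n!=(1-t)^{a}((1-t)^{b}-1)^{k}/k!$, substitute $(a,b)=(-\alpha/\beta,1/\beta)$ with $t\mapsto -t$ to handle the sign $(-1)^{n-k}$, and then recognize the resulting $k$-sum as the defining generating function of $P_{k}^{(\alpha,\beta)}$ evaluated at $1-(1+t)^{1/\beta}$, which collapses to $e^{xt}$. Your write-up merely makes explicit the substitution step that the paper leaves implicit in its final equality.
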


\begin{proof}
Since%
\begin{equation*}
\underset{n\geq k}{\sum }S_{\alpha ,\beta }\left( n,k\right) \frac{t^{n}}{n!}%
=\frac{1}{k!}\underset{n\geq k}{\sum }\left( \frac{d}{dx}\right)
^{k}P_{n}^{\left( \alpha ,\beta \right) }\left( 0\right) \frac{t^{n}}{n!}=%
\frac{1}{k!}\left( \left( 1-t\right) ^{\beta }-1\right) ^{k}\left(
1-t\right) ^{\alpha },
\end{equation*}%
it results%
\begin{eqnarray*}
\underset{n\geq 0}{\sum }\left( \underset{k=0}{\overset{n}{\sum }}\widetilde{%
S}_{\alpha ,\beta }\left( n,k\right) P_{k}^{\left( \alpha ,\beta \right)
}\left( x\right) \right) \frac{t^{n}}{n!} &=&\underset{k\geq 0}{\sum }\left(
-1\right) ^{k}P_{k}^{\left( \alpha ,\beta \right) }\left( x\right) \underset{%
n\geq k}{\sum }S_{-\frac{\alpha }{\beta },\frac{1}{\beta }}\left( n,k\right) 
\frac{\left( -t\right) ^{n}}{n!} \\
&=&\underset{k\geq 0}{\sum }\left( -1\right) ^{k}\frac{P_{k}^{\left( \alpha
,\beta \right) }\left( x\right) }{k!}\left( \left( 1+t\right) ^{\frac{1}{%
\beta }}-1\right) ^{k}\left( 1+t\right) ^{-\frac{\alpha }{\beta }} \\
&=&\left( 1+t\right) ^{-\frac{\alpha }{\beta }}\underset{k\geq 0}{\sum }%
P_{k}^{\left( \alpha ,\beta \right) }\left( x\right) \frac{\left( 1-\left(
1+t\right) ^{\frac{1}{\beta }}\right) ^{k}}{k!} \\
&=&\exp \left( xt\right) ,
\end{eqnarray*}%
which shows the desired identity.
\end{proof}

\begin{corollary}
\label{C4}There holds%
\begin{equation*}
\left \langle -\alpha -\beta x\right \rangle _{n}=\underset{j=0}{\overset{n}{%
\sum }}S_{\alpha ,\beta }\left( n,j\right) \left( x\right) _{j}.
\end{equation*}
\end{corollary}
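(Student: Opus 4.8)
The plan is to read the claimed identity as the expansion of the degree-$n$ polynomial $x\mapsto \left\langle -\alpha-\beta x\right\rangle_{n}$ in the basis of falling factorials. Since $\beta\neq 0$, the product $\left\langle -\alpha-\beta x\right\rangle_{n}=\prod_{i=0}^{n-1}\left(-\alpha-\beta x+i\right)$ is a polynomial in $x$ of degree exactly $n$, and $\{(x)_{0},(x)_{1},\dots,(x)_{n}\}$ is a basis of the space of polynomials of degree $\le n$. Hence there are unique scalars $c_{0},\dots,c_{n}$ (depending on $\alpha,\beta,n$) with $\left\langle -\alpha-\beta x\right\rangle_{n}=\sum_{j=0}^{n}c_{j}(x)_{j}$, and it remains only to identify $c_{j}=S_{\alpha,\beta}(n,j)$.

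To compute the $c_{j}$ I would invoke Newton's forward-difference formula: for any polynomial $f$ with $\deg f\le n$ one has $f(x)=\sum_{j=0}^{n}\frac{(\Delta^{j}f)(0)}{j!}(x)_{j}$, where $(\Delta^{j}f)(0)=\sum_{i=0}^{j}(-1)^{j-i}\binom{j}{i}f(i)$. Applying this to $f(x)=\left\langle -\alpha-\beta x\right\rangle_{n}$ yields
\begin{equation*}
c_{j}=\frac{1}{j!}\sum_{i=0}^{j}(-1)^{j-i}\binom{j}{i}\left\langle -\alpha-\beta i\right\rangle_{n}=S_{\alpha,\beta}(n,j),
\end{equation*}
which is exactly the expression for $S_{\alpha,\beta}(n,j)$ recorded in Proposition \ref{P1}. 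This finishes the proof.

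If one prefers to avoid quoting Newton's formula, the same conclusion follows by binomial inversion. Writing $a_{k}:=k!\,S_{\alpha,\beta}(n,k)$ and $b_{j}:=\left\langle -\alpha-\beta j\right\rangle_{n}$, the definition of $S_{\alpha,\beta}$ reads $a_{k}=\sum_{j=0}^{k}(-1)^{k-j}\binom{k}{j}b_{j}$, whence $b_{m}=\sum_{k=0}^{m}\binom{m}{k}a_{k}=\sum_{k=0}^{m}(m)_{k}\,S_{\alpha,\beta}(n,k)$ for every integer $m\ge 0$. Because $P_{n}^{(\alpha,\beta)}$ has degree $\le n$ by Proposition \ref{P1}, we have $S_{\alpha,\beta}(n,k)=0$ for $k>n$, and $(m)_{k}=0$ for $k>m$, so in fact $\left\langle -\alpha-\beta m\right\rangle_{n}=\sum_{k=0}^{n}S_{\alpha,\beta}(n,k)(m)_{k}$ for all $m\in\mathbb{Z}_{\ge 0}$. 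Both sides are polynomials in $m$ of degree $\le n$ agreeing at infinitely many points, so they agree identically, which is the assertion of Corollary \ref{C4}.

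There is essentially no serious obstacle here; the only point that needs a little care is the justification that the falling-factorial coefficients of a polynomial are its normalized finite differences at $0$ (equivalently, the validity of the binomial-inversion step), which is the same mechanism that yields the classical identity $x^{n}=\sum_{k}S(n,k)(x)_{k}$ for Stirling numbers of the second kind. One should also keep in mind the hypothesis $\beta\neq 0$, used above only to ensure the left-hand side genuinely has degree $n$, although the stated identity holds regardless.
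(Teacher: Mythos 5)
Your proof is correct, but it follows a different route from the paper's. The paper posits the expansion $\left\langle -\alpha-\beta x\right\rangle_{n}=\sum_{j=0}^{n}\delta\left(n,j\right)\left(x\right)_{j}$ with undetermined coefficients, substitutes it into the Dobinski-type identity $P_{n}^{\left(\alpha,\beta\right)}\left(x\right)=e^{-x}\sum_{k\geq 0}\left\langle -\alpha-\beta k\right\rangle_{n}\frac{x^{k}}{k!}$ of Proposition \ref{P1}, uses the elementary fact $e^{-x}\sum_{k\geq j}\left(k\right)_{j}\frac{x^{k}}{k!}=x^{j}$ to turn each falling factorial into a monomial, and then reads off $\delta\left(n,j\right)=S_{\alpha,\beta}\left(n,j\right)$ by matching against the polynomial expansion $P_{n}^{\left(\alpha,\beta\right)}\left(x\right)=\sum_{j}S_{\alpha,\beta}\left(n,j\right)x^{j}$; so it trades on both halves of Proposition \ref{P1} simultaneously. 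You instead start from the explicit double-sum formula for $S_{\alpha,\beta}\left(n,k\right)$ and recognize the corollary as Newton's forward-difference expansion (equivalently, binomial inversion) of the degree-$n$ polynomial $x\mapsto\left\langle -\alpha-\beta x\right\rangle_{n}$ in the falling-factorial basis. Your argument is more self-contained and makes the combinatorial mechanism transparent --- it is literally the same inversion that gives $x^{n}=\sum_{k}S\left(n,k\right)\left(x\right)_{k}$ --- at the cost of invoking (and justifying) Newton's formula or the inversion lemma; the paper's argument avoids that machinery but leans on the analytic identity involving $e^{-x}$. Both are complete; your closing remarks about degree, the vanishing of $S_{\alpha,\beta}\left(n,k\right)$ for $k>n$, and the extension from integer $m$ to all $x$ correctly patch the only small gaps in the inversion variant.
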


\begin{proof}
Let $\left \langle -\alpha -\beta x\right \rangle _{n}=\underset{j=0}{%
\overset{n}{\sum }}\delta \left( n,j\right) \left( x\right) _{j}.$ Then,
from Proposition \ref{P1} we get%
\begin{equation*}
P_{n}^{\left( \alpha ,\beta \right) }\left( x\right) =e^{-x}\underset{k\geq 0%
}{\sum }\left \langle -\alpha -\beta k\right \rangle _{n}\frac{x^{k}}{k!}=%
\underset{j=0}{\overset{n}{\sum }}\delta \left( n,j\right) \left( e^{-x}%
\underset{k\geq j}{\sum }\left( k\right) _{j}\frac{x^{k}}{k!}\right) =%
\underset{j=0}{\overset{n}{\sum }}\delta \left( n,j\right) x^{j},
\end{equation*}%
which gives $\delta \left( n,j\right) =S_{\alpha ,\beta }\left( n,j\right) .$
\end{proof}

\noindent If $\mathcal{B}_{n}$ denote the $n$-th Bell polynomial, then when
we replace $t$ by $1-e^{t}$ in the generating function of the sequence $%
\left( P_{n}^{\left( \alpha ,\beta \right) }\left( x\right) \right) ,$ then $%
P_{n}^{\left( \alpha ,\beta \right) }\left( x\right) $ can be written in the
basis $\left \{ 1,\mathcal{B}_{1}\left( x\right) ,\ldots ,\mathcal{B}%
_{n}\left( x\right) \right \} $ as follows:

\begin{proposition}
\label{P2}There holds%
\begin{equation*}
\underset{k=0}{\overset{n}{\sum }}\left( -1\right) ^{k}S\left( n,k\right)
P_{k}^{\left( \alpha ,\beta \right) }\left( x\right) =\underset{k=0}{\overset%
{n}{\sum }}\binom{n}{k}\alpha ^{n-k}\beta ^{k}\mathcal{B}_{k}\left( x\right)
,
\end{equation*}%
or equivalently%
\begin{equation*}
P_{n}^{\left( \alpha ,\beta \right) }\left( x\right) =\underset{j=0}{\overset%
{n}{\sum }}\beta ^{j}\left( \underset{k=j}{\overset{n}{\sum }}\left(
-1\right) ^{k}\left \vert s\left( n,k\right) \right \vert \alpha
^{k-j}\right) \mathcal{B}_{j}\left( x\right) ,
\end{equation*}%
where $s\left( n,k\right) $ and $S\left( n,k\right) $ are, respectively, the
Stirling numbers of the first and second kind, see for instance \cite%
{com,xu1}.
\end{proposition}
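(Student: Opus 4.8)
The plan is to work directly with generating functions, exploiting the substitution $t \mapsto 1 - e^t$ that is flagged in the text just before the statement. Recall that the exponential generating function of the Bell polynomials is $\sum_{k\geq 0}\mathcal{B}_k(x)\frac{t^k}{k!} = \exp\!\left(x(e^t-1)\right)$. Starting from the defining relation
\[
\sum_{n\geq 0}P_n^{(\alpha,\beta)}(x)\frac{t^n}{n!}=(1-t)^{\alpha}\exp\!\left(x\left((1-t)^{\beta}-1\right)\right),
\]
I would replace $t$ by $1-e^{s}$ throughout. The right-hand side becomes $e^{\alpha s}\exp\!\left(x(e^{\beta s}-1)\right)=e^{\alpha s}\sum_{j\geq 0}\beta^{j}\mathcal{B}_j(x)\frac{s^j}{j!}$, while the left-hand side becomes $\sum_{k\geq 0}P_k^{(\alpha,\beta)}(x)\frac{(1-e^{s})^k}{k!}$.

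The next step is to expand $(1-e^{s})^k = (-1)^k(e^s-1)^k$ using the classical identity $(e^s-1)^k = k!\sum_{n\geq k}S(n,k)\frac{s^n}{n!}$, so that
\[
\sum_{k\geq 0}P_k^{(\alpha,\beta)}(x)\frac{(1-e^{s})^k}{k!}=\sum_{k\geq 0}(-1)^kP_k^{(\alpha,\beta)}(x)\sum_{n\geq k}S(n,k)\frac{s^n}{n!}=\sum_{n\geq 0}\left(\sum_{k=0}^{n}(-1)^kS(n,k)P_k^{(\alpha,\beta)}(x)\right)\frac{s^n}{n!}.
\]
On the other side, multiplying the two exponential generating functions $e^{\alpha s}=\sum_{m\geq 0}\alpha^m\frac{s^m}{m!}$ and $\sum_{j\geq 0}\beta^j\mathcal{B}_j(x)\frac{s^j}{j!}$ and using the binomial (Cauchy) product gives $\sum_{n\geq 0}\left(\sum_{k=0}^{n}\binom{n}{k}\alpha^{n-k}\beta^k\mathcal{B}_k(x)\right)\frac{s^n}{n!}$. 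Comparing coefficients of $\frac{s^n}{n!}$ yields the first displayed identity.

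For the equivalent ``inverted'' form, I would invert the relation $\sum_{k=0}^{n}(-1)^kS(n,k)P_k^{(\alpha,\beta)}(x)=\sum_{k=0}^{n}\binom{n}{k}\alpha^{n-k}\beta^k\mathcal{B}_k(x)$ using the orthogonality of the Stirling numbers of the first and second kind. Concretely, since $\sum_k(-1)^{k}S(n,k)(\cdot)$ has inverse given by the signed Stirling numbers of the first kind — recall $\sum_{k}s(n,k)S(k,m)=\delta_{n,m}$ and $(-1)^{n-k}|s(n,k)|=s(n,k)$ — one multiplies through by the appropriate triangular inverse matrix and reindexes. Substituting the right-hand side $\sum_{k}\binom{k}{j}\alpha^{k-j}\beta^j\mathcal{B}_j(x)$ and collecting the coefficient of $\beta^j\mathcal{B}_j(x)$ produces the stated double sum $\sum_{j=0}^{n}\beta^j\left(\sum_{k=j}^{n}(-1)^k|s(n,k)|\alpha^{k-j}\right)\mathcal{B}_j(x)$; one should double-check the sign bookkeeping, since that is where the factor $(-1)^k$ inside rather than outside can easily be misplaced.

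The only real obstacle is a bookkeeping one: making the substitution $t=1-e^s$ legitimate at the level of formal power series (it is, because $1-e^s = -s-\tfrac{s^2}{2}-\cdots$ has zero constant term and nonzero linear term, so composition is well-defined and the resulting rearrangements of sums converge formally), and then keeping the signs straight through the Stirling inversion. No analytic subtlety arises; everything is an identity of formal power series in $s$ with polynomial coefficients in $x$, so equating coefficients of $s^n$ is justified throughout.
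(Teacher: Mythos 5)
Your proof of the first displayed identity is correct and is exactly the route the paper intends: the text before the proposition offers only the hint ``replace $t$ by $1-e^{t}$ in the generating function'' and supplies no written proof, and your substitution $t=1-e^{s}$, the expansion $(e^{s}-1)^{k}=k!\sum_{n\geq k}S(n,k)\frac{s^{n}}{n!}$, and the Cauchy product with $e^{\alpha s}$ carry that hint out correctly; the formal-power-series justification you add is adequate.

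The second half, however, has a genuine gap, and it is not the sign bookkeeping you flag. Inverting the first identity via $b_{n}=\sum_{m}s(n,m)a_{m}$ with $b_{k}=(-1)^{k}P_{k}^{\left( \alpha ,\beta \right) }(x)$ and $a_{m}=\sum_{j}\binom{m}{j}\alpha ^{m-j}\beta ^{j}\mathcal{B}_{j}(x)$, then writing $s(n,k)=(-1)^{n-k}\left\vert s(n,k)\right\vert$ and swapping the order of summation, yields
\[
P_{n}^{\left( \alpha ,\beta \right) }\left( x\right) =\sum_{j=0}^{n}\beta
^{j}\Bigl( \sum_{k=j}^{n}\left( -1\right) ^{k}\left\vert s\left( n,k\right)
\right\vert \binom{k}{j}\alpha ^{k-j}\Bigr) \mathcal{B}_{j}\left( x\right) ,
\]
i.e.\ the binomial coefficient $\binom{k}{j}$ that you correctly carry into the substitution does not vanish when you ``collect the coefficient of $\beta ^{j}\mathcal{B}_{j}(x)$'' --- yet you assert the outcome is the binomial-free double sum in the statement. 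The two expressions genuinely differ: for $n=2$ and $\alpha =\beta =-\tfrac{1}{2}$ the binomial-free formula gives $\tfrac{1}{4}x^{2}+x+\tfrac{3}{4}$, whereas $P_{2}^{\left( -1/2,-1/2\right) }(x)=U_{2}(x)=\tfrac{1}{4}x^{2}+\tfrac{5}{4}x+\tfrac{3}{4}$, which is what the version with $\binom{k}{j}$ returns. So the step ``produces the stated double sum'' fails; what your computation actually establishes is the corrected identity displayed above (and, as a by-product, it shows the second display of the proposition as printed is missing the factor $\binom{k}{j}$, an omission that propagates to the later formulas for $U_{n}$, $V_{n}$ and $L_{n}^{\left( \lambda \right) }$). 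To complete your write-up you should carry out the inversion explicitly and record the $\binom{k}{j}$ rather than asserting agreement with the printed formula.
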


\noindent Let $B_{n+r,k+r}^{\left( r\right) }\left( \left( a_{i},i\geq
1\right) ;\left( b_{i},i\geq 1\right) \right) $ are the partial $r$-Bell
polynomials \cite{cho,mih3,sha} defined by%
\begin{equation*}
\underset{n\geq k}{\sum }B_{n+r,k+r}^{\left( r\right) }\left(
a_{l};b_{l}\right) \frac{t^{n}}{n!}=\frac{1}{k!}\left( \underset{j\geq 1}{%
\sum }a_{j}\frac{t^{j}}{j!}\right) ^{k}\left( \underset{j\geq 0}{\sum }%
b_{j+1}\frac{t^{j}}{j!}\right) ^{r}.
\end{equation*}%
and $B_{n,k}\left( \left( a_{i},i\geq 1\right) \right) =B_{n,k}^{\left(
0\right) }\left( \left( a_{i},i\geq 1\right) ;\left( b_{i},i\geq 1\right)
\right) $ are the partial Bell polynomials \cite{abo,com,mih1,mih2}. An
expression of $P_{n}^{\left( \alpha ,\beta \right) }\left( x\right) $ in
terms of the partial $r$-Bell polynomials is as follows.

\begin{proposition}
\label{P3}For any non-negative integer $r,$ there hold%
\begin{equation*}
S_{r\alpha ,\beta }\left( n,k\right) =B_{n+r,k+r}^{\left( r\right) }\left(
\left \langle -\beta \right \rangle _{j};\left \langle -\alpha \right
\rangle _{j-1}\right) ,
\end{equation*}%
which imply%
\begin{equation*}
P_{n}^{\left( r\alpha ,\beta \right) }\left( x\right) =\underset{k=0}{%
\overset{n}{\sum }}B_{n+r,k+r}^{\left( r\right) }\left( \left \langle -\beta
\right \rangle _{j};\left \langle -\alpha \right \rangle _{j-1}\right) x^{k}.
\end{equation*}
\end{proposition}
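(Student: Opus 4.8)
The plan is to verify the first identity by comparing the exponential generating functions in $t$ of both sides, and then to deduce the expansion of $P_n^{(r\alpha,\beta)}(x)$ at once from Proposition \ref{P1}.

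For the left-hand side, I would start from the generating function already obtained in the proof of Proposition \ref{P5}, namely
\[
\sum_{n\geq k}S_{\alpha,\beta}(n,k)\frac{t^n}{n!}=\frac{1}{k!}\left((1-t)^{\beta}-1\right)^{k}(1-t)^{\alpha},
\]
and specialize $\alpha$ to $r\alpha$, so that the sequence $\bigl(S_{r\alpha,\beta}(n,k)\bigr)_{n\geq k}$ has generating function $\frac{1}{k!}\left((1-t)^{\beta}-1\right)^{k}(1-t)^{r\alpha}$.

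For the right-hand side, I would insert $a_j=\langle-\beta\rangle_j$ and $b_j=\langle-\alpha\rangle_{j-1}$ (so that $b_{j+1}=\langle-\alpha\rangle_j$) into the defining generating function of the partial $r$-Bell polynomials. The two inner series are then evaluated by the classical binomial expansion $\sum_{j\geq 0}\langle\gamma\rangle_j\frac{t^j}{j!}=(1-t)^{-\gamma}$: the first factor becomes $\sum_{j\geq 1}\langle-\beta\rangle_j\frac{t^j}{j!}=(1-t)^{\beta}-1$, where the sum starting at $j=1$ accounts for the removal of the constant term $\langle-\beta\rangle_0=1$, while the second factor becomes $\sum_{j\geq 0}\langle-\alpha\rangle_j\frac{t^j}{j!}=(1-t)^{\alpha}$, raised to the power $r$. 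This gives
\[
\sum_{n\geq k}B_{n+r,k+r}^{(r)}\!\left(\langle-\beta\rangle_j;\langle-\alpha\rangle_{j-1}\right)\frac{t^n}{n!}=\frac{1}{k!}\left((1-t)^{\beta}-1\right)^{k}(1-t)^{r\alpha},
\]
which is exactly the generating function computed above; equating coefficients of $t^n/n!$ yields the first identity. The second identity is then immediate: by Proposition \ref{P1} with $\alpha$ replaced by $r\alpha$ one has $P_n^{(r\alpha,\beta)}(x)=\sum_{k=0}^{n}S_{r\alpha,\beta}(n,k)x^k$, and substituting the just-proved formula for $S_{r\alpha,\beta}(n,k)$ completes the argument.

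There is no genuine difficulty here; everything reduces to manipulations of formal power series together with the binomial series for rising factorials. The only point demanding a little care is the index bookkeeping in the definition of the partial $r$-Bell polynomials, i.e. the fact that the $a$-series runs over $j\geq 1$ while the $b$-series is written with the shift $b_{j+1}$, so that the constant term $1$ is subtracted in the first factor but retained in the second.
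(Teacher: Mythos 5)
Your proof is correct and follows essentially the same route as the paper: both arguments identify the exponential generating function $\frac{1}{k!}\left((1-t)^{\beta}-1\right)^{k}(1-t)^{r\alpha}$ of $\bigl(S_{r\alpha,\beta}(n,k)\bigr)_{n\geq k}$, expand the two factors via $(1-t)^{-\gamma}=\sum_{j\geq 0}\langle\gamma\rangle_{j}\frac{t^{j}}{j!}$, and match the result against the defining generating function of the partial $r$-Bell polynomials, with the second identity then following from Proposition \ref{P1}. Your handling of the index shift $b_{j+1}=\langle-\alpha\rangle_{j}$ matches the paper's computation exactly.
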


\begin{proof}
From definition we may state%
\begin{eqnarray*}
\underset{n\geq 0}{\sum }\left( \frac{d}{dx}\right) ^{k}P_{n}^{\left(
r\alpha ,\beta \right) }\left( 0\right) \frac{t^{n}}{n!} &=&\left( \left(
1-t\right) ^{\beta }-1\right) ^{k}\left( 1-t\right) ^{r\alpha } \\
&=&\left( \underset{n\geq 1}{\sum }\left \langle -\beta \right \rangle _{n}%
\frac{t^{n}}{n!}\right) ^{k}\left( \underset{n\geq 0}{\sum }\left \langle
-\alpha \right \rangle _{n}\frac{t^{n}}{n!}\right) ^{r} \\
&=&k!\underset{n\geq k}{\sum }B_{n+r,k+r}^{\left( r\right) }\left( \left
\langle -\beta \right \rangle _{j};\left \langle -\alpha \right \rangle
_{j-1}\right) \frac{t^{n}}{n!},
\end{eqnarray*}%
and by connection to $S_{\alpha ,\beta }\left( n,k\right) $ defined above,
this last expansion gives\newline
$S_{r\alpha ,\beta }\left( n,k\right) =\frac{1}{k!}\left( \frac{d}{dx}%
\right) ^{k}P_{n}^{\left( r\alpha ,\beta \right) }\left( 0\right)
=B_{n+r,k+r}^{\left( r\right) }\left( \left \langle -\beta \right \rangle
_{j};\left \langle -\alpha \right \rangle _{j-1}\right) .$
\end{proof}

\section{Some properties of the polynomials $P_{n}^{\left( \protect \alpha ,%
\protect \beta \right) }$}

In this section, we show the link of the sequence of polynomials $\left(
P_{n}^{\left( \alpha ,\beta \right) }\left( x\right) \right) $ to the
successive derivatives of a given function and we give sufficient conditions
on $\alpha $ and $\beta $ for which the polynomial $P_{n}^{\left( \alpha
,\beta \right) }$ has only real zeros.

\begin{lemma}
\label{L1}There holds%
\begin{equation*}
P_{n+1}^{\left( \alpha ,\beta \right) }\left( x\right) =\left( n-\alpha
-\beta x\right) P_{n}^{\left( \alpha ,\beta \right) }\left( x\right) -\beta x%
\frac{d}{dx}P_{n}^{\left( \alpha ,\beta \right) }\left( x\right) .
\end{equation*}
\end{lemma}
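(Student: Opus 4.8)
The plan is to work directly with the exponential generating function
\[
F(t,x):=\sum_{n\ge 0}P_n^{\left(\alpha,\beta\right)}(x)\frac{t^n}{n!}=\left(1-t\right)^{\alpha}\exp\left(x\left(\left(1-t\right)^{\beta}-1\right)\right),
\]
and to turn the recurrence into a first-order linear partial differential equation satisfied by $F$. Differentiating with respect to $t$ gives
\[
\frac{\partial F}{\partial t}=\left(-\frac{\alpha}{1-t}-\frac{\beta x\left(1-t\right)^{\beta}}{1-t}\right)F,
\]
so that $\left(1-t\right)\partial_t F=\left(-\alpha-\beta x\left(1-t\right)^{\beta}\right)F$. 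The only term here that is not immediately expressible through the $P_n^{\left(\alpha,\beta\right)}$ is the one carrying the factor $\left(1-t\right)^{\beta}$, and the key observation is that differentiating $F$ with respect to $x$ yields $\partial_x F=\left(\left(1-t\right)^{\beta}-1\right)F$, hence $\left(1-t\right)^{\beta}F=F+\partial_x F$. Substituting this back eliminates the awkward factor and produces the clean identity
\[
\left(1-t\right)\frac{\partial F}{\partial t}=\left(-\alpha-\beta x\right)F-\beta x\frac{\partial F}{\partial x}.
\]

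Next I would compare the coefficients of $t^n/n!$ on both sides. On the left, $\partial_t F=\sum_{n\ge 0}P_{n+1}^{\left(\alpha,\beta\right)}(x)\,t^n/n!$, and multiplying by $1-t$ and reindexing the shifted sum shows that the coefficient of $t^n/n!$ in $\left(1-t\right)\partial_t F$ is $P_{n+1}^{\left(\alpha,\beta\right)}(x)-n\,P_n^{\left(\alpha,\beta\right)}(x)$. On the right, $\partial_x F=\sum_{n\ge 0}\frac{d}{dx}P_n^{\left(\alpha,\beta\right)}(x)\,t^n/n!$, so the coefficient of $t^n/n!$ there equals $\left(-\alpha-\beta x\right)P_n^{\left(\alpha,\beta\right)}(x)-\beta x\frac{d}{dx}P_n^{\left(\alpha,\beta\right)}(x)$. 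Equating the two expressions and moving $n\,P_n^{\left(\alpha,\beta\right)}(x)$ to the right-hand side gives exactly the asserted formula.

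Since every step is a routine manipulation of the generating function, there is no real obstacle; the only point demanding a little care is the bookkeeping in the index shift when multiplying by $1-t$, and the one genuine idea is the substitution $\left(1-t\right)^{\beta}F=F+\partial_x F$ that removes the non-polynomial factor. As a sanity check, for $n=0$ the recurrence reduces to $P_1^{\left(\alpha,\beta\right)}(x)=-\alpha-\beta x$, consistent with the values listed after the definition of the family. An alternative route would be to start from the Dobinski-type expression of Proposition~\ref{P1} and differentiate the series $e^{-x}\sum_{k\ge 0}\left\langle-\alpha-\beta k\right\rangle_n x^k/k!$ term by term, but the generating-function argument is shorter and cleaner.
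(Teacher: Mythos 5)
Your argument is correct. The computation of $\partial_t F$, the elimination of the factor $(1-t)^{\beta}$ via $(1-t)^{\beta}F=F+\partial_x F$, and the coefficient extraction from $(1-t)\partial_t F$ (yielding $P_{n+1}^{(\alpha,\beta)}(x)-nP_n^{(\alpha,\beta)}(x)$) are all sound, and the resulting identity matches the lemma; the $n=0$ check against $P_1^{(\alpha,\beta)}(x)=-\beta x-\alpha$ is also consistent with the listed values.

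Your route differs from the paper's. The paper works with the Dobinski-type series of Proposition \ref{P1}, writing $e^{x}P_{n}^{(\alpha,\beta)}(x)=\sum_{k\ge 0}\langle-\alpha-\beta k\rangle_{n}\,x^{k}/k!$, applying $-\beta x\frac{d}{dx}$ termwise, and using the splitting $-\beta k=(-\alpha-\beta k+n)+(\alpha-n)$ together with $(-\alpha-\beta k+n)\langle-\alpha-\beta k\rangle_{n}=\langle-\alpha-\beta k\rangle_{n+1}$ to recognize $e^{x}P_{n+1}^{(\alpha,\beta)}(x)$; this is exactly the ``alternative route'' you mention at the end. Your generating-function argument is self-contained (it does not rely on Proposition \ref{P1}) and makes the structural origin of the recurrence transparent as a first-order PDE for $F$, at the cost of a small amount of bookkeeping in the index shift; the paper's version instead exploits the rising-factorial identity directly and stays entirely at the level of the polynomials. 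Both are complete proofs of the same statement.
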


\begin{proof}
One can verify that the function%
\begin{equation*}
F_{\alpha ,\beta }\left( t,x\right) =\left( 1-t\right) ^{\alpha }\exp \left(
x\left( \left( 1-t\right) ^{\beta }-1\right) \right)
\end{equation*}%
is a solution of the partial differential equation%
\begin{equation*}
\left( 1-t\right) \frac{d}{dt}Y+\beta x\frac{d}{dx}Y+\left( \alpha +\beta
x\right) Y=0
\end{equation*}%
from which it results the desired identity.
\end{proof}

\begin{theorem}
\label{T4}For $x>0$ and $n\geq 0$ we have%
\begin{eqnarray*}
P_{n}^{\left( \alpha ,\beta \right) }\left( x^{\beta }\right) &=&\left(
-1\right) ^{n}x^{n-\alpha }e^{-x^{\beta }}\left( \frac{d}{dx}\right)
^{n}\left( x^{\alpha }e^{x^{\beta }}\right) , \\
P_{n}^{\left( \alpha ,\beta \right) }\left( x^{-\beta }\right) &=&x^{\alpha
+1}e^{-x^{-\beta }}\left( \frac{d}{dx}\right) ^{n}\left( x^{n-1-\alpha
}e^{x^{-\beta }}\right) .
\end{eqnarray*}
\end{theorem}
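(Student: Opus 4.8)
The plan is to reduce both identities to the Dobinski-type formula
\[
P_n^{\left(\alpha,\beta\right)}\left(y\right)=e^{-y}\sum_{k\geq 0}\left\langle -\alpha-\beta k\right\rangle_n\frac{y^k}{k!}
\]
of Proposition \ref{P1}, simply by expanding the function sitting inside the $n$-th derivative as a power series and differentiating it term by term. One could instead induct on $n$ using the recurrence of Lemma \ref{L1} together with the chain rule (writing $Q_n(x)=P_n^{\left(\alpha,\beta\right)}(x^{\beta})$ and checking $Q_n$ satisfies the image of that recurrence under $x\mapsto x^{\beta}$), but the bookkeeping for the second formula, where the inner power $x^{n-1-\alpha}$ itself varies with $n$, is more delicate; the direct route below dispatches both cases uniformly.

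For the first identity, fix $x>0$ and write $x^{\alpha}e^{x^{\beta}}=\sum_{k\geq 0}\frac{x^{\alpha+\beta k}}{k!}$. On $(0,\infty)$ this is a product of analytic functions, so term-by-term differentiation is legitimate and
\[
\left(\frac{d}{dx}\right)^{n}\left(x^{\alpha}e^{x^{\beta}}\right)=\sum_{k\geq 0}\frac{\left(\alpha+\beta k\right)_n}{k!}\,x^{\alpha+\beta k-n},
\]
where $\left(z\right)_n=z\left(z-1\right)\cdots\left(z-n+1\right)$ is the falling factorial. Multiplying by $\left(-1\right)^{n}x^{n-\alpha}e^{-x^{\beta}}$ cancels the factor $x^{\alpha-n}$ and leaves $e^{-x^{\beta}}\sum_{k\geq 0}\frac{\left(-1\right)^{n}\left(\alpha+\beta k\right)_n}{k!}x^{\beta k}$; since $\left(-1\right)^{n}\left(z\right)_n=\left\langle -z\right\rangle_n$ with $z=\alpha+\beta k$, this equals $e^{-x^{\beta}}\sum_{k\geq 0}\left\langle -\alpha-\beta k\right\rangle_n\frac{\left(x^{\beta}\right)^{k}}{k!}$, which by Proposition \ref{P1} is exactly $P_n^{\left(\alpha,\beta\right)}\left(x^{\beta}\right)$.

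For the second identity I would argue the same way, now starting from $x^{n-1-\alpha}e^{x^{-\beta}}=\sum_{k\geq 0}\frac{x^{\,n-1-\alpha-\beta k}}{k!}$, so that
\[
\left(\frac{d}{dx}\right)^{n}\left(x^{n-1-\alpha}e^{x^{-\beta}}\right)=\sum_{k\geq 0}\frac{\left(n-1-\alpha-\beta k\right)_n}{k!}\,x^{-1-\alpha-\beta k}.
\]
Here the falling factorial telescopes into a rising factorial carrying no sign, $\left(n-1-\alpha-\beta k\right)_n=\prod_{i=0}^{n-1}\left(-\alpha-\beta k+i\right)=\left\langle -\alpha-\beta k\right\rangle_n$, and multiplying by $x^{\alpha+1}e^{-x^{-\beta}}$ collapses the powers of $x$ to $x^{-\beta k}$, leaving $e^{-x^{-\beta}}\sum_{k\geq 0}\left\langle -\alpha-\beta k\right\rangle_n\frac{\left(x^{-\beta}\right)^{k}}{k!}=P_n^{\left(\alpha,\beta\right)}\left(x^{-\beta}\right)$ by Proposition \ref{P1}.

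There is essentially no hard step: the only points that need a moment's care are the (routine) justification of term-by-term differentiation for $x>0$ and the factorial bookkeeping. In the first formula a factor $\left(-1\right)^{n}$ is needed to turn $\left(\alpha+\beta k\right)_n$ into $\left\langle -\alpha-\beta k\right\rangle_n$, and it is supplied by the prefactor $\left(-1\right)^{n}x^{n-\alpha}e^{-x^{\beta}}$; in the second formula the product $\left(n-1-\alpha-\beta k\right)_n$ is already the unsigned $\left\langle -\alpha-\beta k\right\rangle_n$, which is precisely why there the inner exponent is $n-1-\alpha$ and the prefactor is $x^{\alpha+1}e^{-x^{-\beta}}$.
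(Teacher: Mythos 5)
Your proof is correct. For the second identity it is essentially the paper's own argument: the authors also expand $x^{n-1-\alpha}e^{x^{-\beta}}$ as $\sum_{k\geq 0}x^{n-1-\alpha-\beta k}/k!$ (after a change of variable $y=x^{-1/\beta}$), differentiate term by term, and recognize the Dobinski-type series of Proposition \ref{P1}; your observation that $\left(n-1-\alpha-\beta k\right)_{n}=\left\langle -\alpha-\beta k\right\rangle_{n}$ is exactly the step they use. For the first identity you diverge from the paper's main proof but coincide with the alternative the authors record in the Remark immediately after the theorem. Their main proof instead runs through Lemma \ref{L1}: setting $f_{n}^{\left(\alpha,\beta\right)}\left(x\right)=\left(-1\right)^{n}x^{\left(\alpha-n\right)/\beta}e^{x}P_{n}^{\left(\alpha,\beta\right)}\left(x\right)$, the recurrence becomes the telescoping relation $f_{n}=\tfrac{d}{d\left(x^{1/\beta}\right)}f_{n-1}$, so $f_{n}=\bigl(\tfrac{d}{d\left(x^{1/\beta}\right)}\bigr)^{n}\left(x^{\alpha/\beta}e^{x}\right)$. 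That route makes the Rodrigues-type derivative structure emerge directly from the three-term recurrence (which is conceptually tidy and mirrors the classical formulas for $U_{n}$ and $V_{n}$ quoted in the introduction), whereas your direct Dobinski computation is shorter and, as you note, handles both identities uniformly without the bookkeeping caused by the $n$-dependent exponent in the second one. The only point worth stating slightly more carefully is the justification of term-by-term $n$-fold differentiation of $\sum_{k\geq 0}x^{\alpha+\beta k}/k!$ on $\left(0,\infty\right)$; local uniform convergence of the differentiated series (the coefficients grow at most polynomially in $k$ against the $1/k!$) settles it, and the paper passes over this silently as well.
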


\begin{proof}
For the first identity, Lemma \ref{L1} gives%
\begin{equation*}
P_{n}^{\left( \alpha ,\beta \right) }\left( x\right) =\left( n-1-\alpha
-\beta x\right) P_{n-1}^{\left( \alpha ,\beta \right) }\left( x\right)
-\beta x\frac{d}{dx}P_{n-1}^{\left( \alpha ,\beta \right) }\left( x\right) ,
\end{equation*}%
and if we set $f_{n}^{\left( \alpha ,\beta \right) }\left( x\right) :=\left(
-1\right) ^{n}x^{\frac{\alpha -n}{\beta }}e^{x}P_{n}^{\left( \alpha ,\beta
\right) }\left( x\right) ,$ the last identity can also be written as%
\begin{equation*}
f_{n}^{\left( \alpha ,\beta \right) }\left( x\right) =\beta x^{1-\frac{1}{%
\beta }}\frac{d}{dx}f_{n-1}^{\left( \alpha ,\beta \right) }\left( x\right) =%
\frac{d}{d\left( x^{1/\beta }\right) }f_{n-1}^{\left( \alpha ,\beta \right)
}\left( x\right)
\end{equation*}%
which implies $f_{n}^{\left( \alpha ,\beta \right) }\left( x\right) =\left( 
\frac{d}{d\left( x^{1/\beta }\right) }\right) ^{n}f_{0}^{\left( \alpha
,\beta \right) }\left( x\right) =\left( \frac{d}{d\left( x^{1/\beta }\right) 
}\right) ^{n}\left( x^{\frac{\alpha }{\beta }}e^{x}\right) .$ So, we get%
\begin{equation*}
P_{n}^{\left( \alpha ,\beta \right) }\left( x\right) =\left( -1\right)
^{n}x^{\frac{n-\alpha }{\beta }}e^{-x}\left( \frac{d}{d\left( x^{1/\beta
}\right) }\right) ^{n}\left( x^{\frac{\alpha }{\beta }}e^{x}\right)
\end{equation*}%
or equivalently $P_{n}^{\left( \alpha ,\beta \right) }\left( y^{\beta
}\right) =\left( -1\right) ^{n}y^{n-\alpha }e^{-y^{\beta }}\left( \frac{d}{dy%
}\right) ^{n}\left( y^{\alpha }e^{y^{\beta }}\right) .$\newline
For the second identity, we proceed as follows%
\begin{eqnarray*}
\left( \frac{d}{d\left( x^{-1/\beta }\right) }\right) ^{n}\left( x^{\frac{%
\alpha +1-n}{\beta }}e^{x}\right) &=&\left( \frac{d}{dy}\right)
_{y=x^{-1/\beta }}^{n}\left( y^{n-\alpha -1}e^{y^{-\beta }}\right) \\
&=&\left( \frac{d}{dy}\right) _{y=x^{-1/\beta }}^{n}\underset{k\geq 0}{\sum }%
\frac{1}{k!}y^{n-1-\alpha -k\beta } \\
&=&\underset{k\geq 0}{\sum }\left( n-1-\alpha -k\beta \right) _{n}\left. 
\frac{y^{-1-\alpha -k\beta }}{k!}\right \vert _{y=x^{-1/\beta }} \\
&=&x^{\frac{\alpha +1}{\beta }}\underset{k\geq 0}{\sum }\left \langle
-\alpha -k\beta \right \rangle _{n}\frac{x^{k}}{k!} \\
&=&x^{\frac{\alpha +1}{\beta }}e^{x}P_{n}^{\left( \alpha ,\beta \right)
}\left( x\right) ,
\end{eqnarray*}%
i.e. $P_{n}^{\left( \alpha ,\beta \right) }\left( x\right) =x^{-\frac{\alpha
+1}{\beta }}e^{-x}\left( \frac{d}{d\left( x^{-1/\beta }\right) }\right)
^{n}\left( x^{\frac{\alpha +1-n}{\beta }}e^{x}\right) $ which is equivalent
to the desired identity.
\end{proof}

\begin{remark}
The first hand of Theorem \ref{T4} can also be proved directly using
Proposition \ref{P1} as follows:%
\begin{eqnarray*}
P_{n}^{\left( \alpha ,\beta \right) }\left( x^{\beta }\right)
&=&e^{-x^{\beta }}\underset{k\geq 0}{\sum }\left \langle -\alpha -\beta
k\right \rangle _{n}\frac{x^{\beta k}}{k!} \\
&=&\left( -1\right) ^{n}x^{n-\alpha }e^{-x^{\beta }}\underset{k\geq 0}{\sum }%
\left( \alpha +\beta k\right) _{n}\frac{x^{\alpha +\beta k-n}}{k!} \\
&=&\left( -1\right) ^{n}x^{n-\alpha }e^{-x^{\beta }}\left( \frac{d}{dx}%
\right) ^{n}\left( \underset{k\geq 0}{\sum }\frac{x^{\alpha +\beta k}}{k!}%
\right) \\
&=&\left( -1\right) ^{n}x^{n-\alpha }e^{-x^{\beta }}\left( \frac{d}{dx}%
\right) ^{n}\left( x^{\alpha }e^{x^{\beta }}\right) .
\end{eqnarray*}
\end{remark}

\noindent Theorem \ref{T4} proves that $\mathcal{B}_{n}\left( x^{\beta
}\right) $ can also be written in a similar form as follows.

\begin{proposition}
For $x>0$ and $n\geq 0$ we have%
\begin{equation*}
\mathcal{B}_{n}\left( x^{\beta }\right) =x^{-\alpha }e^{-x^{\beta }}\left( x%
\frac{d}{dx}-\frac{\alpha }{\beta }\right) ^{n}\left( x^{\alpha }e^{x^{\beta
}}\right) ,\  \  \beta \neq 0.
\end{equation*}
\end{proposition}

\begin{proof}
From Proposition \ref{P2} we get%
\begin{equation*}
\frac{1}{\alpha ^{n}}\underset{k=0}{\overset{n}{\sum }}\left( -1\right)
^{k}S\left( n,k\right) P_{k}^{\left( \alpha ,\beta \right) }\left( x\right) =%
\underset{k=0}{\overset{n}{\sum }}\binom{n}{k}\left( \frac{\beta }{\alpha }%
\right) ^{k}\mathcal{B}_{k}\left( x\right) ,
\end{equation*}%
which implies by using Theorem \ref{T4}%
\begin{eqnarray*}
\left( \frac{\beta }{\alpha }\right) ^{n}\mathcal{B}_{n}\left( x^{\beta
}\right)  &=&\underset{k=0}{\overset{n}{\sum }}\left( -1\right) ^{n-k}\binom{%
n}{k}\frac{1}{\alpha ^{k}}\underset{j=0}{\overset{k}{\sum }}\left( -1\right)
^{j}S\left( k,j\right) P_{j}^{\left( \alpha ,\beta \right) }\left( x^{\beta
}\right)  \\
&=&\underset{k=0}{\overset{n}{\sum }}\left( -1\right) ^{n-k}\binom{n}{k}%
\frac{1}{\alpha ^{k}}\underset{j=0}{\overset{k}{\sum }}\left( -1\right)
^{j}S\left( k,j\right) \left( -1\right) ^{j}x^{j-\alpha }e^{-x^{\beta
}}\left( \frac{d}{dx}\right) ^{j}\left( x^{\alpha }e^{x^{\beta }}\right)  \\
&=&\left( -1\right) ^{n}x^{-\alpha }e^{-x^{\beta }}\underset{k=0}{\overset{n}%
{\sum }}\binom{n}{k}\left( -\frac{1}{\alpha }\right) ^{k}\underset{j=0}{%
\overset{k}{\sum }}S\left( k,j\right) x^{j}\left( \frac{d}{dx}\right)
^{j}\left( x^{\alpha }e^{x^{\beta }}\right) .
\end{eqnarray*}%
On using the identity $\underset{j=0}{\overset{k}{\sum }}S\left( k,j\right)
x^{j}\left( \frac{d}{dx}\right) ^{j}=\left( x\frac{d}{dx}\right) ^{k}$ \cite%
{eu}, it follows 
\begin{eqnarray*}
\mathcal{B}_{n}\left( x^{\beta }\right)  &=&\left( -\frac{\alpha }{\beta }%
\right) ^{n}x^{-\alpha }e^{-x^{\beta }}\underset{k=0}{\overset{n}{\sum }}%
\binom{n}{k}\left( -\frac{x}{\alpha }\frac{d}{dx}\right) ^{k}\left(
x^{\alpha }e^{x^{\beta }}\right)  \\
&=&\left( -\frac{\alpha }{\beta }\right) ^{n}x^{-\alpha }e^{-x^{\beta
}}\left( 1-\frac{x}{\alpha }\frac{d}{dx}\right) ^{n}\left( x^{\alpha
}e^{x^{\beta }}\right)  \\
&=&x^{-\alpha }e^{-x^{\beta }}\left( x\frac{d}{dx}-\frac{\alpha }{\beta }%
\right) ^{n}\left( x^{\alpha }e^{x^{\beta }}\right) ,
\end{eqnarray*}%
which remains true for $\alpha =0.$
\end{proof}

\noindent To study the real zeros of $P_{n}^{\left( \alpha ,\beta \right) },$
we use the following known theorem. Indeed, let $P_{1}$ and $P_{2}$ be two
polynomials having only real zeros and let $x_{n}\leq \cdots \leq x_{1}$ and 
$y_{m}\leq \cdots \leq y_{1}$ be the zeros of $P_{1}$ and $P_{2},$
respectively. Following \cite{wag}, we say that $P_{2}$ interlaces $P_{1}$
if $m=n-1$ and%
\begin{equation*}
x_{n}\leq y_{n-1}\leq x_{n-1}\leq \cdots \leq y_{1}\leq x_{1}
\end{equation*}%
and that $P_{2}$ alternates left of $P_{1}$ if $m=n$ and%
\begin{equation*}
y_{n}\leq x_{n}\leq y_{n-1}\leq x_{n-1}\leq \cdots \leq y_{1}\leq x_{1}.
\end{equation*}

\begin{theorem}
\label{T1}\cite[Th. 1]{wan} Let $a_{1},a_{2},\ b_{1},\ b_{2}$ be real
numbers, let $P_{1},$ $P_{2}$ be two polynomials whose leading coefficients
have the same sign and let $P\left( x\right) =\left( a_{1}x+b_{1}\right)
P_{1}\left( x\right) +\left( a_{2}x+b_{2}\right) P_{2}\left( x\right) .$
Suppose that $P_{1},\ P_{2}$ have only real zeros and $P_{2}$ interlaces $%
P_{1}$ or $P_{2}$ alternates left of $P_{1}.$ Then, if $a_{1}b_{2}\geq
b_{1}a_{2},$ $P(x)\ $has only real zeros.
\end{theorem}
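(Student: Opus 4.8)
The plan is to prove the statement by a sign-variation (intermediate value) argument. First I would make a few harmless reductions. Replacing $(P_1,P_2)$ by $(-P_1,-P_2)$ if necessary, I may assume both leading coefficients are positive, which merely replaces $P$ by $-P$; the degenerate cases ($P_1$ or $P_2$ constant, or $a_1=a_2=0$) are checked directly; and, since having only real zeros is a closed condition on any family of polynomials of bounded degree, I may also assume the generic situation in which all zeros of $P_1$ and of $P_2$ are simple, the interlacing (resp.\ left-alternation) is strict, and $-b_1/a_1$ and $-b_2/a_2$, when defined, avoid every zero of $P_1P_2$; the general case then follows by continuity.

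Write $n=\deg P_1$, let $x_n<\cdots<x_1$ be the zeros of $P_1$ and $y_m<\cdots<y_1$ those of $P_2$, with $m=n-1$ in the interlacing case and $m=n$ in the left-alternating case, and set $\ell_1:=a_1x+b_1$, $\ell_2:=a_2x+b_2$. The idea is to read off the sign of $P$ at the test points consisting of the $x_i$, the $y_j$, the zeros $-b_1/a_1$ and $-b_2/a_2$ of $\ell_1$ and $\ell_2$, and $\pm\infty$, and to count sign changes; since (generically) $\deg P=n+1$, producing at least $n+1$ changes forces $P$ to have only real zeros. At a zero of $P_1$ one has $P(x_i)=\ell_2(x_i)P_2(x_i)$, and the interlacing hypothesis together with the positivity of the leading coefficients gives $\operatorname{sgn}P_2(x_i)=(-1)^{i-1}$, so the sign of $P$ at $x_i$ is $(-1)^{i-1}\operatorname{sgn}\ell_2(x_i)$; likewise $P(y_j)=\ell_1(y_j)P_1(y_j)$ with $\operatorname{sgn}P_1(y_j)=\pm(-1)^{j}$ (the parity depending on which of the two cases we are in), so the sign of $P$ at $y_j$ is $\pm(-1)^{j}\operatorname{sgn}\ell_1(y_j)$. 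Thus the entire sign sequence of $P$ along the real line is pinned down by the interlacing data of $P_1,P_2$ and by where $-b_1/a_1$ and $-b_2/a_2$ fall among the $x_i$ and $y_j$.

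The crux is that the hypothesis $a_1b_2\ge b_1a_2$ controls precisely this last piece of information and thereby forces enough sign changes. Indeed $a_1b_2-b_1a_2$ is the constant Wronskian $\ell_1'\ell_2-\ell_1\ell_2'$ of the two linear forms, so when $a_1,a_2$ have the same sign the inequality places the zero of $\ell_2$ on a prescribed side of the zero of $\ell_1$, and in general it constrains the order of $-b_1/a_1$ and $-b_2/a_2$. I would then go through the intervals cut out by $\{x_i\}\cup\{y_j\}\cup\{-b_1/a_1,-b_2/a_2\}$ one at a time: on most of them the alternation of the sign of $P$ is immediate from the formulas above, and on the one or two intervals where a linear factor reverses the sign dictated by $P_1$ or $P_2$, the constraint coming from $a_1b_2\ge b_1a_2$ guarantees that the sign flip contributed by the other linear factor, or by the leading behaviour at $\pm\infty$, lands where it is needed rather than cancelling a change. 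Counting the zeros so obtained, $P$ has at least $\deg P$ real zeros, and hence all of its zeros are real; removing the genericity assumptions by the limiting argument above finishes the proof.

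The step I expect to be the main obstacle is this interval-by-interval accounting. It requires a case split according to the signs of $a_1$ and $a_2$, the exact positions of $-b_1/a_1$ and $-b_2/a_2$ relative to the zeros of $P_1P_2$, and whether $P_2$ interlaces or alternates left of $P_1$, with $a_1b_2\ge b_1a_2$ entering differently in each branch and with the contribution at $\pm\infty$ needing separate attention. A somewhat slicker route would be to prove directly that $(a_2x+b_2)P_2$ interlaces or alternates left of $(a_1x+b_1)P_1$ under the stated hypotheses and then invoke the standard fact that a sum of two real-rooted polynomials in such a relation is again real-rooted; but verifying the interlacing of these two products reduces to the same positional analysis of the extra zeros $-b_1/a_1$ and $-b_2/a_2$.
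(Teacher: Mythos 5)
The paper does not prove this statement at all: it is quoted, with attribution, from Wang and Yeh \cite[Th.~1]{wan}, so there is no internal proof to compare your attempt against. Judged on its own merits, your proposal is an outline rather than a proof. The preliminary reductions (normalizing the leading coefficients, passing by continuity to the generic case of simple roots and strict interlacing) are standard and acceptable, and the starting point is correct: $P(x_{i})=(a_{2}x_{i}+b_{2})P_{2}(x_{i})$, $P(y_{j})=(a_{1}y_{j}+b_{1})P_{1}(y_{j})$, with $\operatorname{sgn}P_{2}(x_{i})=(-1)^{i-1}$ forced by the interlacing and the sign normalization; this is indeed how the sign-change proof in the cited source begins, and the observation that $a_{1}b_{2}-a_{2}b_{1}$ is the Wronskian of the two linear forms is the right way to see why the hypothesis locates $-b_{2}/a_{2}$ relative to $-b_{1}/a_{1}$.

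The genuine gap is the step you yourself flag as ``the main obstacle'': the interval-by-interval accounting that converts the inequality $a_{1}b_{2}\geq b_{1}a_{2}$ into at least $\deg P$ sign changes of $P$. That accounting is the entire content of the theorem, and it is only described, not carried out. In particular you do not verify the cases where $a_{1}$ and $a_{2}$ have opposite signs, where one or both of $-b_{1}/a_{1}$, $-b_{2}/a_{2}$ fall outside the convex hull of the roots of $P_{1}P_{2}$, where one of $a_{1},a_{2}$ vanishes, or where the leading terms of the two summands cancel so that $\deg P\leq n$ and the count of required sign changes must be adjusted; and you must also split according to whether $P_{2}$ interlaces or alternates left of $P_{1}$, since the parity of $\operatorname{sgn}P_{1}(y_{j})$ differs between the two. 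Asserting that the hypothesis guarantees the sign flip ``lands where it is needed'' states the conclusion of that analysis rather than establishing it. Until this case analysis is written out (or the problem is reduced to a quotable interlacing lemma for $(a_{1}x+b_{1})P_{1}$ and $(a_{2}x+b_{2})P_{2}$, which, as you note, requires the same positional analysis), the argument is incomplete precisely at its crux.
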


\begin{theorem}
\label{T3}Let%
\begin{eqnarray*}
A &=&\left \{ \left( \alpha ,\beta \right) \in \mathbb{R}^{2}:\left( \beta
-1\right) ^{2}+4\alpha \beta \geq 0,\  \  \beta <0,\  \alpha \leq 2\right \} ,
\\
\widetilde{A} &=&\left \{ \left( \alpha ,\beta \right) \in \mathbb{R}%
^{2}:\beta >0,\  \alpha \geq 1\right \} .
\end{eqnarray*}%
Then, for $\left( \alpha ,\beta \right) \in A,$ the polynomial $%
P_{n}^{\left( \alpha ,\beta \right) }$ has only real zeros, $n\geq 1,$ and,
for $\left( \alpha ,\beta \right) \in \widetilde{A},$ the polynomials $%
P_{1}^{\left( \alpha ,\beta \right) },\ldots ,P_{\left \lceil \alpha
\right
\rceil }^{\left( \alpha ,\beta \right) }$ has only real zeros, where 
$\left
\lceil \alpha \right \rceil $ is the smallest integer $\geq \alpha .$
\end{theorem}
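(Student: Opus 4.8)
The plan is to induct on $n$, using Lemma~\ref{L1} to pass from $P_n^{(\alpha,\beta)}$ to $P_{n+1}^{(\alpha,\beta)}$, and to carry out the inductive step by a sign count along the real line. It is convenient to pass to the monic normalization $Q_n(x):=P_n^{(\alpha,\beta)}(-x/\beta)$, which has the same zero structure as $P_n^{(\alpha,\beta)}$ and, after substituting $x\mapsto -x/\beta$ in Lemma~\ref{L1}, obeys
\[
Q_{n+1}(x)=(x+n-\alpha)\,Q_n(x)-\beta x\,Q_n'(x).
\]
By Proposition~\ref{P1}, $Q_n$ is monic of degree $n$ with $Q_n(0)=(-1)^n(\alpha)_n$, so $Q_n(0)\neq0$ unless $\alpha\in\{0,1,\dots,n-1\}$; these exceptional (and boundary) parameter values will be reached at the end by a limiting argument, using that a coefficientwise limit of degree-$n$ real-rooted polynomials is real-rooted. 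For the same reason we may assume in the inductive step that the zeros of $Q_n$ are simple.

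\emph{The inductive step.} Suppose $Q_n$ has only real zeros, simple ones $r_n<\cdots<r_1$, none equal to $0$. Putting $x=r_i$ in the recurrence kills the first term, so $Q_{n+1}(r_i)=-\beta r_i\,Q_n'(r_i)$; since $Q_n$ is monic with simple zeros, $\operatorname{sign}Q_n'(r_i)=(-1)^{i-1}$, hence $\operatorname{sign}Q_{n+1}(r_i)=\operatorname{sign}(-\beta)\operatorname{sign}(r_i)(-1)^{i-1}$. Furthermore $Q_{n+1}(0)=(n-\alpha)Q_n(0)$, whose sign is $\operatorname{sign}(n-\alpha)\cdot(-1)^{p}$ with $p$ the number of positive zeros of $Q_n$, while $\operatorname{sign}Q_{n+1}(\pm\infty)$ comes from the monic leading term. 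Inserting $0$ at its correct place in $r_n<\cdots<r_1$ and reading the signs of $Q_{n+1}$ along $-\infty,\;r_n,\dots,r_1,\;+\infty$ together with $0$, a short computation (the only delicate point being the two comparisons flanking $0$, which absorb the jump of $\operatorname{sign}(r_i)$) shows that $Q_{n+1}$ has exactly $n+1$ sign changes as soon as $n-\alpha$ and $\beta$ have opposite signs, i.e. $(n-\alpha)\beta<0$. Since $\deg Q_{n+1}=n+1$ this forces all zeros of $Q_{n+1}$ to be real; when $\beta>0$ the same count shows moreover that they all remain positive.

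\emph{Assembling the two regimes.} For $(\alpha,\beta)\in A$ we have $\beta<0$, so the step goes through whenever $n-\alpha>0$. If $\alpha<1$ we start at $n=1$ with $Q_1(x)=x-\alpha$, and $n-\alpha>0$ for all $n\ge1$; if $1\le\alpha\le2$ we start at $n=2$, and this is exactly where the definition of $A$ is used: $Q_2(x)=x^2-(2\alpha+\beta-1)x+(\alpha)_2$ has discriminant $(\beta-1)^2+4\alpha\beta\ge0$, so $Q_2$ is real-rooted, and thereafter $n-\alpha>0$ for all $n\ge2$ (the boundary case $\alpha=2$ being covered by the limiting argument). Hence $P_n^{(\alpha,\beta)}$ has only real zeros for every $n\ge1$. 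For $(\alpha,\beta)\in\widetilde A$ we have $\beta>0$, so the step requires $n-\alpha<0$, i.e. $n<\alpha$; starting from $Q_1$, whose zero $\alpha$ is positive, we may run the step for $n=1,\dots,\lceil\alpha\rceil-1$, each time keeping all zeros positive, which shows $P_1^{(\alpha,\beta)},\dots,P_{\lceil\alpha\rceil}^{(\alpha,\beta)}$ have only real zeros.

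The bulk of the work lies in the sign bookkeeping of the inductive step — one must allow the $r_i$ to straddle the origin (they need not be of one sign) and track where $0$ is inserted — and in the routine passage to the degenerate and boundary parameter values. Finally, in the subregion where $(n-\alpha)\beta\ge0$ the sign count can be replaced by a direct appeal to Theorem~\ref{T1} with $P_1=P_n^{(\alpha,\beta)}$ and $P_2=(P_n^{(\alpha,\beta)})'$, which interlaces $P_1$ by Rolle and has leading coefficient $n(-\beta)^n$ of the same sign as that of $P_1$; but this covers only part of $A\cup\widetilde A$, so the sign-count argument is what carries the theorem in general.
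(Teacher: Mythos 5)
Your proof is correct, and while it shares the paper's overall skeleton --- induction on $n$ via Lemma~\ref{L1}, with the discriminant condition $(\beta-1)^2+4\alpha\beta\ge 0$ entering only to secure the base case $n=2$ when $1<\alpha\le 2$ --- the inductive step is carried out by a genuinely different means. The paper disposes of the step in one line by citing Theorem~\ref{T1} (Wang--Yeh) with $P_1=P_n^{(\alpha,\beta)}$ and $P_2=(P_n^{(\alpha,\beta)})'$, under the condition $-\beta(n-\alpha)\ge 0$; you instead prove exactly the instance needed by an explicit sign-change count on $Q_{n+1}(x)=(x+n-\alpha)Q_n(x)-\beta xQ_n'(x)$ evaluated at the zeros of $Q_n$, at $0$ and at $\pm\infty$. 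I checked the ``short computation'' you defer: in both regimes (zeros straddling the origin when $\beta<0$, all zeros positive when $\beta>0$) one does get $n+1$ sign changes precisely when $(n-\alpha)\beta<0$, so the step is sound. Your approach buys three things the paper's does not: (i) it is self-contained, which matters here because Theorem~\ref{T1} as quoted has its inequality reversed (the condition $a_1b_2\ge b_1a_2$ fails already for $x\cdot x+1\cdot 1=x^2+1$; the correct hypothesis is $b_1a_2\ge a_1b_2$, which is what the paper's proof silently uses) --- this reversal is also why your closing remark locates the applicable subregion at $(n-\alpha)\beta\ge 0$ rather than $\le 0$, and why you concluded, incorrectly as it happens, that Theorem~\ref{T1} covers only part of $A\cup\widetilde A$; (ii) it establishes the extra fact that in the $\widetilde A$ regime all zeros stay positive, which is what keeps the induction running up to $\lceil\alpha\rceil$; (iii) it addresses the degenerate parameters ($\alpha\in\{0,1,\dots,n-1\}$, where $Q_n(0)=(-1)^n(\alpha)_n=0$, the boundary $\alpha=2$, and multiple zeros) by a perturbation-and-limit argument, whereas the paper simply assumes the zeros of $P_n^{(\alpha,\beta)}$ are nonzero. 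The only cost of your route is that the central bookkeeping is asserted rather than displayed; it should be written out, but it is correct.
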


\begin{proof}
We proceed by induction on $n\geq 1.$ For $n=1,$ the polynomial $%
P_{1}^{\left( \alpha \right) }\left( x\right) =-\beta x-\alpha $ has a real
zero, and for $n=2,$ the polynomial 
\begin{equation*}
P_{2}^{\left( \alpha \right) }\left( x\right) =\beta ^{2}x^{2}+\beta \left(
2\alpha +\beta -1\right) x+\alpha \left( \alpha -1\right)
\end{equation*}%
has only real zeros when $\left( \beta -1\right) ^{2}+4\alpha \beta \geq 0$
and $\beta <0.$\newline
Assume that $P_{n}^{\left( \alpha ,\beta \right) }\left( x\right) $ has $n$ $%
\left( \geq 2\right) $ real zeros different from zero, since the heading
coefficient of $P_{n}^{\left( \alpha ,\beta \right) }\left( x\right) $ is $%
S_{\alpha ,\beta }\left( n,n\right) =\left( -\beta \right) ^{n}$ and then
heading coefficient of $\frac{d}{dx}P_{n}^{\left( \alpha ,\beta \right)
}\left( x\right) $ is $nS_{\alpha ,\beta }\left( n,n\right) =n\left( -\beta
\right) ^{n},$ then they are of the same sign. Also, since $\frac{d}{dx}%
P_{n}^{\left( \alpha ,\beta \right) }\left( x\right) $ interlaces $%
P_{n}^{\left( \alpha ,\beta \right) }\left( x\right) $ it follows from
Theorem \ref{T1} that if $-\beta \left( n-\alpha \right) \geq 0,$ $%
P_{n+1}^{\left( \alpha ,\beta \right) }$ has only real zeros. The condition $%
-\beta \left( n-\alpha \right) \geq 0$ is satisfied when $\left( \alpha
,\beta \right) \in A$ because $-\beta \left( n-\alpha \right) \geq -\beta
\left( 2-\alpha \right) \geq 0.$ It is also satisfied when $n\in \left[
1,\left \lceil \alpha \right \rceil -1\right] $ and $\left( \alpha ,\beta
\right) \in \widetilde{A}$ because $-\beta \left( n-\alpha \right) \geq
-\beta \left( \left \lceil \alpha \right \rceil -1-\alpha \right) \geq 0.$
\end{proof}

\begin{corollary}
\label{C1}For $\alpha \leq 0$ and $\beta <0$ the sequence $\left( S_{\alpha
,\beta }\left( n,k\right) ;\ 0\leq k\leq n\right) $ is strictly log-concave,
more precisely%
\begin{equation*}
\left( S_{\alpha ,\beta }\left( n,k\right) \right) ^{2}\geq \left( 1+\frac{1%
}{k}\right) \left( 1+\frac{1}{n-k}\right) S_{\alpha ,\beta }\left(
n,k+1\right) S_{\alpha ,\beta }\left( n,k-1\right) ,\  \ 1\leq k\leq n-1.
\end{equation*}
\end{corollary}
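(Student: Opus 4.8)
The plan is to derive the stated estimate from Theorem \ref{T3} combined with the classical Newton inequalities. First note that when $\alpha\leq 0$ and $\beta<0$ we have $4\alpha\beta\geq 0$, hence $(\beta-1)^{2}+4\alpha\beta\geq 0$, and of course $\alpha\leq 0\leq 2$; thus $(\alpha,\beta)\in A$, and Theorem \ref{T3} guarantees that
\begin{equation*}
P_{n}^{(\alpha,\beta)}(x)=\sum_{k=0}^{n}S_{\alpha,\beta}(n,k)\,x^{k}
\end{equation*}
has only real zeros for every $n\geq 1$. Moreover its degree is exactly $n$, since the leading coefficient $S_{\alpha,\beta}(n,n)=(-\beta)^{n}$ is nonzero.

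Next I would invoke Newton's inequalities: for a real polynomial $\sum_{k=0}^{n}a_{k}x^{k}$ of degree $n$ having only real zeros one has, for $1\leq k\leq n-1$,
\begin{equation*}
a_{k}^{2}\geq \frac{\binom{n}{k}^{2}}{\binom{n}{k-1}\binom{n}{k+1}}\,a_{k-1}a_{k+1}.
\end{equation*}
Using $\binom{n}{k}/\binom{n}{k-1}=(n-k+1)/k$ and $\binom{n}{k}/\binom{n}{k+1}=(k+1)/(n-k)$, the coefficient on the right equals
\begin{equation*}
\frac{(n-k+1)(k+1)}{k(n-k)}=\left(1+\frac{1}{k}\right)\left(1+\frac{1}{n-k}\right),
\end{equation*}
so the choice $a_{k}=S_{\alpha,\beta}(n,k)$ yields precisely the inequality in the statement.

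To upgrade this to \emph{strict} log-concavity one needs positivity of the coefficients. Reading Lemma \ref{L1} off coefficientwise gives the recurrence
\begin{equation*}
S_{\alpha,\beta}(n+1,k)=(n-\alpha-\beta k)\,S_{\alpha,\beta}(n,k)-\beta\,S_{\alpha,\beta}(n,k-1),
\end{equation*}
in which $n-\alpha-\beta k>0$ and $-\beta>0$ whenever $\alpha\leq 0$ and $\beta<0$; an easy induction on $n$ then gives $S_{\alpha,\beta}(n,k)>0$ for $1\leq k\leq n$, together with $S_{\alpha,\beta}(n,0)=\langle-\alpha\rangle_{n}$, which is positive if $\alpha<0$ and equal to $0$ if $\alpha=0$. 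Since $(1+1/k)(1+1/(n-k))>1$ and the product $S_{\alpha,\beta}(n,k+1)S_{\alpha,\beta}(n,k-1)$ is positive for $1\leq k\leq n-1$ — apart from the single boundary case $\alpha=0$, $k=1$, where it vanishes and the inequality reduces to $S_{0,\beta}(n,1)^{2}>0$ — the displayed inequality is strict, which is strict log-concavity.

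I do not anticipate a genuine obstacle here. Everything hinges on the twin observations that Theorem \ref{T3} supplies the real-rootedness of $P_{n}^{(\alpha,\beta)}$ and that Newton's inequalities, written in the $\binom{n}{k}$-normalized form, produce exactly the constants $(1+1/k)(1+1/(n-k))$; the only point requiring care is the sign bookkeeping for the $S_{\alpha,\beta}(n,k)$ and the degenerate value $S_{\alpha,\beta}(n,0)=0$ at $\alpha=0$.
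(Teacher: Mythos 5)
Your proposal is correct and follows essentially the same route as the paper: real-rootedness of $P_{n}^{(\alpha,\beta)}$ from Theorem \ref{T3} (after checking $(\alpha,\beta)\in A$), non-negativity of the coefficients, and Newton's inequalities in the $\binom{n}{k}$-normalized form. You merely supply more detail than the paper does (the verification that $(\alpha,\beta)\in A$, the explicit constant $(1+1/k)(1+1/(n-k))$, and the positivity/strictness bookkeeping via the recurrence), all of which is sound.
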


\begin{proof}
For $\alpha \leq 0$ and $\beta <0$ the polynomial $P_{n}^{\left( \alpha
,\beta \right) }\left( x\right) $ has only real zeros and its coefficients $%
S_{\alpha ,\beta }\left( n,k\right) $ are non-negative, so Newton's
inequality \cite[pp. 52]{har} completes the proof.
\end{proof}

\section{Differential equations and recurrence relations}

In \cite[Sec. 2]{kim} (see also \cite{qi}), the authors give a differential
equation having as solution the function 
\begin{equation*}
\left( 1-t\right) ^{-1/2}\exp \left( x\left( \left( 1-t\right)
^{-1/2}-1\right) \right)
\end{equation*}%
from which they conclude a generalized recurrence relation for the sequence $%
\left( U_{n}\left( x\right) \right) .$ The results of this section simplify
and generalize these results.

\begin{theorem}
\label{T2}Let $m$ be a positive integer. he function 
\begin{equation*}
F_{\alpha ,\beta }\left( t,x\right) :=\left( 1-t\right) ^{\alpha }\exp
\left( x\left( \left( 1-t\right) ^{\beta }-1\right) \right)
\end{equation*}%
satisfies%
\begin{equation*}
\left( \frac{d}{dt}\right) ^{m}F_{\alpha ,\beta }\left( t,x\right)
=F_{\alpha ,\beta }\left( t,x\right) \left( 1-t\right) ^{-m}P_{m}^{\left(
\alpha ,\beta \right) }\left( x\left( 1-t\right) ^{\beta }\right) .
\end{equation*}
\end{theorem}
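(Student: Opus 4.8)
The plan is to argue by induction on $m$, using Lemma \ref{L1} as the engine of the inductive step. For $m=0$ the claim reduces to $P_{0}^{\left(\alpha,\beta\right)}\equiv 1$, so there is nothing to prove.

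Before the induction I would record the case $m=1$ by a direct computation. Differentiating $F_{\alpha,\beta}(t,x)=(1-t)^{\alpha}\exp\left(x\left((1-t)^{\beta}-1\right)\right)$ gives
\begin{equation*}
\frac{d}{dt}F_{\alpha,\beta}(t,x)=F_{\alpha,\beta}(t,x)\left(-\alpha(1-t)^{-1}-\beta x(1-t)^{\beta-1}\right)=F_{\alpha,\beta}(t,x)(1-t)^{-1}\left(-\alpha-\beta x(1-t)^{\beta}\right),
\end{equation*}
and since $P_{1}^{\left(\alpha,\beta\right)}(y)=-\beta y-\alpha$ (read off from the values listed in the Introduction, or equally from evaluating the last display at $t=0$), the right-hand side is exactly $F_{\alpha,\beta}(t,x)(1-t)^{-1}P_{1}^{\left(\alpha,\beta\right)}\left(x(1-t)^{\beta}\right)$. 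This settles $m=1$ and, more importantly, isolates the only analytic input needed: the formula for $\frac{d}{dt}F_{\alpha,\beta}$.

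For the inductive step, assume $\left(\frac{d}{dt}\right)^{m}F_{\alpha,\beta}(t,x)=F_{\alpha,\beta}(t,x)(1-t)^{-m}P_{m}^{\left(\alpha,\beta\right)}(u)$ with $u:=x(1-t)^{\beta}$, and apply $\frac{d}{dt}$ once more. By the product rule the derivative of the right-hand side is a sum of three pieces: the one coming from $\frac{d}{dt}F_{\alpha,\beta}$ (computed above), the one $m(1-t)^{-m-1}P_{m}^{\left(\alpha,\beta\right)}(u)$ coming from the power of $1-t$, and the one coming from the chain rule applied to $P_{m}^{\left(\alpha,\beta\right)}(u)$, where one uses $\frac{du}{dt}=-\beta x(1-t)^{\beta-1}=-\beta u(1-t)^{-1}$. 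Factoring $F_{\alpha,\beta}(t,x)(1-t)^{-m-1}$ out of all three, the bracket that remains is
\begin{equation*}
(-\alpha-\beta u)P_{m}^{\left(\alpha,\beta\right)}(u)+mP_{m}^{\left(\alpha,\beta\right)}(u)-\beta u\,\frac{d}{du}P_{m}^{\left(\alpha,\beta\right)}(u)=(m-\alpha-\beta u)P_{m}^{\left(\alpha,\beta\right)}(u)-\beta u\,\frac{d}{du}P_{m}^{\left(\alpha,\beta\right)}(u),
\end{equation*}
which is precisely $P_{m+1}^{\left(\alpha,\beta\right)}(u)$ by Lemma \ref{L1} (applied with $n=m$ and with the variable $u$ in place of $x$). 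Hence $\left(\frac{d}{dt}\right)^{m+1}F_{\alpha,\beta}(t,x)=F_{\alpha,\beta}(t,x)(1-t)^{-(m+1)}P_{m+1}^{\left(\alpha,\beta\right)}\left(x(1-t)^{\beta}\right)$, closing the induction.

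There is no real obstacle here; the only care needed is the bookkeeping of the three summands produced by the product rule, and the one genuine observation is that the leftover bracket coincides verbatim with the right-hand side of Lemma \ref{L1}. A computation-free alternative would be to first check the functional equation
\begin{equation*}
F_{\alpha,\beta}\left(1-(1-t)(1-s),x\right)=F_{\alpha,\beta}(t,x)\,F_{\alpha,\beta}\left(s,x(1-t)^{\beta}\right),
\end{equation*}
which is immediate from the product-and-exponential form of $F_{\alpha,\beta}$, and then apply $\left(\frac{d}{ds}\right)^{m}$ at $s=0$: on the left, since $\tau:=1-(1-t)(1-s)$ has $\frac{d\tau}{ds}=1-t$ and $\tau=t$ at $s=0$, one obtains $(1-t)^{m}\left(\frac{d}{dt}\right)^{m}F_{\alpha,\beta}(t,x)$; on the right the factor $F_{\alpha,\beta}(t,x)$ is independent of $s$ and $\left(\frac{d}{ds}\right)^{m}F_{\alpha,\beta}(s,y)\big|_{s=0}=P_{m}^{\left(\alpha,\beta\right)}(y)$ by the defining generating function. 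Dividing by $(1-t)^{m}$ yields the theorem.
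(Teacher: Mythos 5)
Your argument is correct, but it takes a genuinely different route from the paper. The paper proves Theorem \ref{T2} by a direct series manipulation: it expands $\exp\left(x(1-t)^{\beta}\right)$, applies $\left(\frac{d}{dt}\right)^{m}$ termwise to produce the factors $\left\langle -\alpha-k\beta\right\rangle _{m}$, converts these via Corollary \ref{C4} into $\sum_{j}S_{\alpha,\beta}(m,j)(k)_{j}$, and resums to recognize $P_{m}^{\left(\alpha,\beta\right)}\left(x(1-t)^{\beta}\right)$ through Proposition \ref{P1}. You instead induct on $m$, and the inductive step is exactly the observation that the product/chain-rule bracket $(m-\alpha-\beta u)P_{m}^{\left(\alpha,\beta\right)}(u)-\beta u\frac{d}{du}P_{m}^{\left(\alpha,\beta\right)}(u)$ is the right-hand side of Lemma \ref{L1}; I checked the bookkeeping (including $\frac{du}{dt}=-\beta u(1-t)^{-1}$ and $P_{1}^{\left(\alpha,\beta\right)}(y)=-\beta y-\alpha$) and it is all correct. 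Your approach makes transparent \emph{why} the theorem holds — it is the generating-function shadow of Lemma \ref{L1} — and needs only that lemma rather than Corollary \ref{C4}; the paper's computation, on the other hand, yields as a byproduct the explicit intermediate form $\left(\frac{d}{dt}\right)^{m}F_{\alpha,\beta}=F_{\alpha,\beta}(1-t)^{-m}\sum_{j}S_{\alpha,\beta}(m,j)x^{j}(1-t)^{j\beta}$, which is what Corollary \ref{C3} actually uses downstream. Your closing functional-equation argument, based on $F_{\alpha,\beta}\left(1-(1-t)(1-s),x\right)=F_{\alpha,\beta}(t,x)F_{\alpha,\beta}\left(s,x(1-t)^{\beta}\right)$, is also valid and is arguably the cleanest of the three, since it derives the statement in one stroke from the definition alone. (Minor remark: the separate $m=1$ verification is not logically needed — the inductive step already carries $m=0$ to $m=1$ since $P_{0}^{\left(\alpha,\beta\right)}\equiv1$ — but it usefully records the derivative formula the induction relies on.)
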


\begin{proof}
From the definition of $F_{\alpha ,\beta }\left( t,x\right) $ and Corollary %
\ref{C4} we obtain%
\begin{eqnarray*}
\left( \frac{d}{dt}\right) ^{m}F_{\alpha ,\beta }\left( t,x\right) &=&\left( 
\frac{d}{dt}\right) ^{m}\left( \left( 1-t\right) ^{\alpha }\exp \left(
x\left( \left( 1-t\right) ^{\beta }-1\right) \right) \right) \\
&=&e^{-x}\underset{k\geq 0}{\sum }\frac{x^{k}}{k!}\left( \frac{d}{dt}\right)
^{m}\left( 1-t\right) ^{\alpha +k\beta } \\
&=&e^{-x}\underset{k\geq 0}{\sum }\frac{x^{k}}{k!}\left \langle -\alpha
-k\beta \right \rangle _{m}\left( 1-t\right) ^{\alpha +k\beta -m} \\
&=&e^{-x}\left( 1-t\right) ^{\alpha -m}\underset{j=0}{\overset{m}{\sum }}%
S_{\alpha ,\beta }\left( m,j\right) \underset{k\geq 0}{\sum }\left( k\right)
_{j}\frac{x^{k}\left( 1-t\right) ^{k\beta }}{k!} \\
&=&e^{-x}\left( 1-t\right) ^{\alpha -m}\underset{j=0}{\overset{m}{\sum }}%
S_{\alpha ,\beta }\left( m,j\right) x^{j}\left( 1-t\right) ^{j\beta }%
\underset{k\geq 0}{\sum }\frac{x^{k}\left( 1-t\right) ^{k\beta }}{k!} \\
&=&\left( 1-t\right) ^{\alpha -m}\exp \left( x\left( \left( 1-t\right)
^{\beta }-1\right) \right) \underset{j=0}{\overset{n}{\sum }}S_{\alpha
,\beta }\left( m,j\right) x^{j}\left( 1-t\right) ^{j\beta } \\
&=&F_{\alpha ,\beta }\left( t,x\right) \left( 1-t\right) ^{-m}\underset{j=0}{%
\overset{m}{\sum }}S_{\alpha ,\beta }\left( m,j\right) x^{j}\left(
1-t\right) ^{j\beta } \\
&=&F_{\alpha ,\beta }\left( t,x\right) \left( 1-t\right) ^{-m}P_{m}^{\left(
\alpha ,\beta \right) }\left( x\left( 1-t\right) ^{\beta }\right) .
\end{eqnarray*}
\end{proof}

\noindent The next corollary gives an expression of $P_{n+m}^{\left( \alpha
,\beta \right) }\left( x\right) $ in terms of the family $\left(
x^{k}P_{j}^{\left( \alpha ,\beta \right) }\left( x\right) \right) .$ The
obtained expression is similar to the expression of the Bell number $B_{n+m}$
given in \cite{spi}, Bell polynomial $B_{n+m}$ given in \cite{bel,gou} and
several generalizations given later, see \cite{kar,mah,mez,mih4,xu}.

\begin{corollary}
\label{C3}For $n,m=0,1,2,\ldots ,$ we have%
\begin{equation*}
P_{n+m}^{\left( \alpha ,\beta \right) }\left( x\right) =\underset{j=0}{%
\overset{n}{\sum }}\underset{k=0}{\overset{m}{\sum }}\binom{n}{j}\left
\langle m-\beta k\right \rangle _{n-j}S_{\alpha ,\beta }\left( m,k\right)
x^{k}P_{j}^{\left( \alpha ,\beta \right) }\left( x\right) .
\end{equation*}%
In particular, for $m=1,$ we obtain%
\begin{equation*}
P_{n+1}^{\left( \alpha ,\beta \right) }\left( x\right) =-\underset{j=0}{%
\overset{n}{\sum }}\binom{n}{j}\left( \alpha \left( n-j\right) !+\beta
x\left \langle 1-\beta \right \rangle _{n-j}\right) P_{j}^{\left( \alpha
,\beta \right) }\left( x\right) .
\end{equation*}
\end{corollary}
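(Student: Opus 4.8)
The plan is to obtain the identity from Theorem~\ref{T2} by comparing Taylor coefficients in~$t$. Since $F_{\alpha,\beta}(t,x)=\sum_{n\geq 0}P_{n}^{(\alpha,\beta)}(x)\,t^{n}/n!$, applying $\left(\frac{d}{dt}\right)^{m}$ shifts the index and gives $\left(\frac{d}{dt}\right)^{m}F_{\alpha,\beta}(t,x)=\sum_{n\geq 0}P_{n+m}^{(\alpha,\beta)}(x)\,t^{n}/n!$. Hence it suffices to expand the right-hand side of Theorem~\ref{T2}, namely $F_{\alpha,\beta}(t,x)\,(1-t)^{-m}P_{m}^{(\alpha,\beta)}\!\left(x(1-t)^{\beta}\right)$, as a power series in~$t$ and read off the coefficient of $t^{n}/n!$.

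To do this, first invoke Proposition~\ref{P1} (equivalently the last displayed line of the present corollary, which is just a restatement of Proposition~\ref{P1}) to write $P_{m}^{(\alpha,\beta)}\!\left(x(1-t)^{\beta}\right)=\sum_{k=0}^{m}S_{\alpha,\beta}(m,k)\,x^{k}(1-t)^{\beta k}$. Substituting this and absorbing $(1-t)^{-m}$ yields
\[
\sum_{n\geq 0}P_{n+m}^{(\alpha,\beta)}(x)\frac{t^{n}}{n!}
=\sum_{k=0}^{m}S_{\alpha,\beta}(m,k)\,x^{k}\,(1-t)^{\beta k-m}\,F_{\alpha,\beta}(t,x).
\]
Now expand the two $t$-dependent factors separately: from the binomial series $(1-t)^{-s}=\sum_{i\geq 0}\langle s\rangle_{i}\,t^{i}/i!$ with $s=m-\beta k$ we get $(1-t)^{\beta k-m}=\sum_{i\geq 0}\langle m-\beta k\rangle_{i}\,t^{i}/i!$, while $F_{\alpha,\beta}(t,x)=\sum_{j\geq 0}P_{j}^{(\alpha,\beta)}(x)\,t^{j}/j!$. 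Taking the Cauchy product of these two exponential generating functions, the coefficient of $t^{n}/n!$ equals $\sum_{j=0}^{n}\binom{n}{j}\langle m-\beta k\rangle_{n-j}\,P_{j}^{(\alpha,\beta)}(x)$; summing over $k$ and equating coefficients of $t^{n}/n!$ gives the stated formula.

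The two particular cases follow by specialization. For $m=1$ use $S_{\alpha,\beta}(1,0)=-\alpha$, $S_{\alpha,\beta}(1,1)=-\beta$ and $\langle 1\rangle_{n-j}=(n-j)!$, which turns the $k=0$ and $k=1$ terms into $-\alpha(n-j)!$ and $-\beta x\langle 1-\beta\rangle_{n-j}$; for $n=0$ use $\binom{0}{0}=1$, $\langle m-\beta k\rangle_{0}=1$ and $P_{0}^{(\alpha,\beta)}(x)=1$, recovering $P_{m}^{(\alpha,\beta)}(x)=\sum_{k=0}^{m}S_{\alpha,\beta}(m,k)x^{k}$ (Proposition~\ref{P1}). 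I do not expect a genuine obstacle here: the argument is entirely a coefficient extraction built on Theorem~\ref{T2}. The only point needing a little care is keeping the $n!$-normalized generating functions straight when forming their product, so that the factor $\binom{n}{j}$ — rather than a bare convolution sum — emerges; everything else is routine bookkeeping.
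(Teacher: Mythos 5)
Your proposal is correct and follows exactly the paper's own route: invoke Theorem \ref{T2}, expand $P_{m}^{\left( \alpha ,\beta \right) }\left( x\left( 1-t\right) ^{\beta }\right)$ via Proposition \ref{P1}, write $\left( 1-t\right) ^{\beta k-m}$ as the exponential series $\sum_{i\geq 0}\left \langle m-\beta k\right \rangle _{i}t^{i}/i!$, and extract the coefficient of $t^{n}/n!$ from the Cauchy product. Your explicit verification of the two special cases (including $S_{\alpha ,\beta }\left( 1,0\right) =-\alpha$, $S_{\alpha ,\beta }\left( 1,1\right) =-\beta$ and $\left \langle 1\right \rangle _{n-j}=\left( n-j\right) !$) is a small addition the paper omits, but the argument is the same.
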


\begin{proof}
On using Theorem \ref{T2}, we have%
\begin{eqnarray*}
\underset{n\geq 0}{\sum }P_{n+m}^{\left( \alpha ,\beta \right) }\left(
x\right) \frac{t^{n}}{n!} &=&\left( \frac{d}{dt}\right) ^{m}F_{\alpha ,\beta
}\left( t,x\right) \\
&=&F_{\alpha ,\beta }\left( t,x\right) \left( 1-t\right) ^{-m}P_{m}^{\left(
\alpha ,\beta \right) }\left( x\left( 1-t\right) ^{\beta }\right) \\
&=&\left( \underset{i\geq 0}{\sum }P_{i}^{\left( \alpha ,\beta \right)
}\left( x\right) \frac{t^{i}}{i!}\right) \left( \underset{k=0}{\overset{m}{%
\sum }}S_{\alpha ,\beta }\left( m,k\right) x^{k}\left( 1-t\right) ^{-m+\beta
k}\right)
\end{eqnarray*}%
\begin{eqnarray*}
&=&\underset{k=0}{\overset{m}{\sum }}S_{\alpha ,\beta }\left( m,k\right)
x^{k}\left( \underset{i\geq 0}{\sum }P_{i}^{\left( \alpha ,\beta \right)
}\left( x\right) \frac{t^{i}}{i!}\right) \left( \underset{j\geq 0}{\sum }%
\left \langle m-\beta k\right \rangle _{j}\frac{t^{j}}{j!}\right) \\
&=&\underset{k=0}{\overset{m}{\sum }}S_{\alpha ,\beta }\left( m,k\right)
x^{k}\underset{n\geq 0}{\sum }\left( \underset{j=0}{\overset{n}{\sum }}%
\binom{n}{j}\left \langle m-\beta k\right \rangle _{n-j}P_{j}^{\left( \alpha
,\beta \right) }\left( x\right) \right) \frac{t^{n}}{n!} \\
&=&\underset{n\geq 0}{\sum }\left( \underset{j=0}{\overset{n}{\sum }}%
\underset{k=0}{\overset{m}{\sum }}\binom{n}{j}\left \langle m-\beta k\right
\rangle _{n-j}S_{\alpha ,\beta }\left( m,k\right) x^{k}P_{j}^{\left( \alpha
,\beta \right) }\left( x\right) \right) \frac{t^{n}}{n!}
\end{eqnarray*}%
which follows gives the desired identity.
\end{proof}

\begin{remark}
For $n=1$ in Corollary \ref{C3} we get%
\begin{equation*}
P_{m+1}^{\left( \alpha ,\beta \right) }\left( x\right) =\underset{j=0}{%
\overset{m+1}{\sum }}\left( \left( m-\alpha -\beta j\right) S_{\alpha ,\beta
}\left( m,j\right) -\beta S_{\alpha ,\beta }\left( m,j-1\right) \right)
x^{j}.
\end{equation*}%
So, since from Proposition \ref{P1} we have $P_{m+1}^{\left( \alpha ,\beta
\right) }\left( x\right) =\underset{j=0}{\overset{m+1}{\sum }}S_{\alpha
,\beta }\left( m+1,j\right) x^{j},$ it results%
\begin{equation*}
S_{\alpha ,\beta }\left( m+1,j\right) =\left( m-\alpha -\beta j\right)
S_{\alpha ,\beta }\left( m,j\right) -\beta S_{\alpha ,\beta }\left(
m,j-1\right) ,
\end{equation*}%
with $S_{\alpha ,\beta }\left( m+1,j\right) =0$ if $j<0$ or $j>m+1.$
\end{remark}

\begin{proposition}
\label{P4}There holds%
\begin{equation*}
P_{n}^{\left( \alpha ,\beta \right) }\left( x\right) =\underset{k=0}{\overset%
{n}{\sum }}\left( -1\right) ^{j}S_{\alpha -\frac{\alpha ^{\prime }}{\beta
^{\prime }}\beta ,\frac{\beta }{\beta ^{\prime }}}\left( n,k\right)
P_{k}^{\left( \alpha ^{\prime },\beta ^{\prime }\right) }\left( x\right) .
\end{equation*}%
In particular, for $\left( \alpha ^{\prime },\beta ^{\prime }\right) =\left(
\alpha ,\beta \right) ,$ we get%
\begin{equation*}
P_{n}^{\left( \lambda \alpha ,\lambda \beta \right) }\left( x\right) =%
\underset{k=0}{\overset{n}{\sum }}\left( -1\right) ^{k}B_{n,k}\left( \left
\langle -\lambda \right \rangle _{j}\right) P_{k}^{\left( \alpha ,\beta
\right) }\left( x\right) ,\  \  \lambda \in \mathbb{R},
\end{equation*}
\end{proposition}

\begin{proof}
From Proposition \ref{P5} we have $x^{k}=\underset{j=0}{\overset{k}{\sum }}%
\left( -1\right) ^{k-j}S_{-\frac{\alpha ^{\prime }}{\beta ^{\prime }},\frac{1%
}{\beta ^{\prime }}}\left( k,j\right) P_{j}^{\left( \alpha ^{\prime },\beta
^{\prime }\right) }\left( x\right) .$ \newline
So, use the identity $P_{n}^{\left( \alpha ,\beta \right) }\left( x\right) =%
\underset{k=0}{\overset{n}{\sum }}S_{\alpha ,\beta }\left( n,k\right) x^{k}$
of Proposition \ref{P1} to obtain

\begin{eqnarray*}
P_{n}^{\left( \alpha ,\beta \right) }\left( x\right) &=&\underset{k=0}{%
\overset{n}{\sum }}S_{\alpha ,\beta }\left( n,k\right) \left( \underset{j=0}{%
\overset{k}{\sum }}\left( -1\right) ^{k-j}S_{-\frac{\alpha ^{\prime }}{\beta
^{\prime }},\frac{1}{\beta ^{\prime }}}\left( k,j\right) P_{j}^{\left(
\alpha ^{\prime },\beta ^{\prime }\right) }\left( x\right) \right) \\
&=&\underset{j=0}{\overset{n}{\sum }}\left( \underset{k=j}{\overset{n}{\sum }%
}\left( -1\right) ^{k-j}S_{\alpha ,\beta }\left( n,k\right) S_{-\frac{\alpha
^{\prime }}{\beta ^{\prime }},\frac{1}{\beta ^{\prime }}}\left( k,j\right)
\right) P_{j}^{\left( \alpha ^{\prime },\beta ^{\prime }\right) }\left(
x\right) .
\end{eqnarray*}%
Now, since from the proof of Proposition \ref{P5}, we have%
\begin{equation*}
\underset{n\geq k}{\sum }S_{\alpha ,\beta }\left( n,k\right) \frac{t^{n}}{n!}%
=\frac{1}{k!}\left( \left( 1-t\right) ^{\beta }-1\right) ^{k}\left(
1-t\right) ^{\alpha },
\end{equation*}%
it follows that the exponential generating function of the sequence $\left(
M\left( n,j\right) ;n\geq j\right) $ defined by%
\begin{equation*}
M\left( n,j\right) :=\underset{k=j}{\overset{n}{\sum }}\left( -1\right)
^{k-j}S_{\alpha ,\beta }\left( n,k\right) S_{-\frac{\alpha ^{\prime }}{\beta
^{\prime }},\frac{1}{\beta ^{\prime }}}\left( k,j\right)
\end{equation*}%
is to be%
\begin{equation*}
\underset{n\geq j}{\sum }M\left( n,j\right) \frac{t^{n}}{n!}=\frac{\left(
-1\right) ^{j}}{j!}\left( 1-t\right) ^{\alpha -\frac{\alpha ^{\prime }}{%
\beta ^{\prime }}\beta }\left( \left( 1-t\right) ^{\frac{\beta }{\beta
^{\prime }}}-1\right) ^{j}
\end{equation*}%
which shows that $M\left( n,j\right) =\left( -1\right) ^{j}S_{\alpha -\frac{%
\alpha ^{\prime }}{\beta ^{\prime }}\beta ,\frac{\beta }{\beta ^{\prime }}%
}\left( n,j\right) .$
\end{proof}

\noindent As a consequence of Proposition \ref{P4}, by combining it with
Propositions \ref{P1}, it results:

\begin{corollary}
\label{C5}For any real numbers $\alpha ,\alpha ^{\prime },\beta ,\beta
^{\prime }$ such that $\beta ^{\prime }\neq 0,$ there hold%
\begin{eqnarray*}
\left \langle -\alpha -\beta x\right \rangle _{n} &=&\underset{j=0}{\overset{%
n}{\sum }}\left( -1\right) ^{j}S_{\alpha -\frac{\alpha ^{\prime }}{\beta
^{\prime }}\beta ,\frac{\beta }{\beta ^{\prime }}}\left( n,j\right) \left
\langle -\alpha ^{\prime }-\beta ^{\prime }x\right \rangle _{j}, \\
S_{\alpha ,\beta }\left( n,k\right) &=&\underset{j=k}{\overset{n}{\sum }}%
\left( -1\right) ^{j}S_{\alpha -\frac{\alpha ^{\prime }}{\beta ^{\prime }}%
\beta ,\frac{\beta }{\beta ^{\prime }}}\left( n,j\right) S_{\alpha ^{\prime
},\beta ^{\prime }}\left( j,k\right) .
\end{eqnarray*}
\end{corollary}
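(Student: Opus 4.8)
The plan is to combine Proposition \ref{P4} with the two expressions for $P_{n}^{\left( \alpha ,\beta \right) }$ provided by Proposition \ref{P1}, and then read off coefficients. To lighten notation, set $\gamma :=\alpha -\frac{\alpha ^{\prime }}{\beta ^{\prime }}\beta $ and $\delta :=\frac{\beta }{\beta ^{\prime }}$, so that Proposition \ref{P4} reads $P_{n}^{\left( \alpha ,\beta \right) }\left( x\right) =\sum_{j=0}^{n}\left( -1\right) ^{j}S_{\gamma ,\delta }\left( n,j\right) P_{j}^{\left( \alpha ^{\prime },\beta ^{\prime }\right) }\left( x\right) $, which is legitimate precisely because $\beta ^{\prime }\neq 0$.

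For the second identity, substitute $P_{n}^{\left( \alpha ,\beta \right) }\left( x\right) =\sum_{k=0}^{n}S_{\alpha ,\beta }\left( n,k\right) x^{k}$ on the left and $P_{j}^{\left( \alpha ^{\prime },\beta ^{\prime }\right) }\left( x\right) =\sum_{k=0}^{j}S_{\alpha ^{\prime },\beta ^{\prime }}\left( j,k\right) x^{k}$ on the right, both from Proposition \ref{P1}. Interchanging the two finite sums on the right and using $S_{\alpha ^{\prime },\beta ^{\prime }}\left( j,k\right) =0$ for $k>j$, the coefficient of $x^{k}$ yields $S_{\alpha ,\beta }\left( n,k\right) =\sum_{j=k}^{n}\left( -1\right) ^{j}S_{\gamma ,\delta }\left( n,j\right) S_{\alpha ^{\prime },\beta ^{\prime }}\left( j,k\right) $. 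For the first identity, instead use the Dobinski-type form $P_{m}^{\left( a,b\right) }\left( x\right) =e^{-x}\sum_{k\geq 0}\left\langle -a-bk\right\rangle _{m}\frac{x^{k}}{k!}$ of Proposition \ref{P1} on both sides of Proposition \ref{P4}, cancel the common factor $e^{-x}$, and compare the coefficient of $\frac{x^{k}}{k!}$. This gives $\left\langle -\alpha -\beta k\right\rangle _{n}=\sum_{j=0}^{n}\left( -1\right) ^{j}S_{\gamma ,\delta }\left( n,j\right) \left\langle -\alpha ^{\prime }-\beta ^{\prime }k\right\rangle _{j}$ for every non-negative integer $k$; since $k\mapsto \left\langle -\alpha -\beta k\right\rangle _{n}$ and each $k\mapsto \left\langle -\alpha ^{\prime }-\beta ^{\prime }k\right\rangle _{j}$ are polynomials in $k$ of degree at most $n$, agreement on the infinitely many non-negative integers forces agreement as polynomials, and replacing $k$ by $x$ gives the asserted identity. (Alternatively, the first identity can be extracted from the second by feeding it into Corollary \ref{C4}.)

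I do not expect a genuine obstacle here: everything reduces to interchanging finite (or absolutely convergent) sums and extracting coefficients, exactly the kind of bookkeeping already carried out in the proofs of Propositions \ref{P1}, \ref{P5} and \ref{P4}. The one point worth stating explicitly is the final step of the first identity, namely that both sides are polynomial functions of the discrete variable $k$, so that the pointwise equality at the non-negative integers may be promoted to an identity in the indeterminate $x$.
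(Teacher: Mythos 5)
Your proposal is correct and follows exactly the route the paper intends: the paper offers no written proof beyond the remark that the corollary results from combining Proposition \ref{P4} with Proposition \ref{P1}, and your argument—substituting the two expansions of Proposition \ref{P1} into Proposition \ref{P4} and extracting coefficients, with the polynomial-extension step for the first identity—supplies precisely the missing bookkeeping.
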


\section{Application to particular polynomials}

\subsection{Application to the polynomials $U_{n}$ and $V_{n}$}

For $n\geq 1,$ the polynomials $U_{n}=P_{n}^{\left( -1/2,-1/2\right) }$ and $%
V_{n}=P_{n}^{\left( -3/2,-1/2\right) }$ defined above, Propositions \ref{P1}%
, \ref{P2} and \ref{P3} give%
\begin{equation*}
\begin{array}{ll}
U_{n}\left( x\right) =e^{-x}\underset{k\geq 0}{\sum }\left \langle \frac{k+1%
}{2}\right \rangle _{n}\frac{x^{k}}{k!}, & V_{n}\left( x\right) =e^{-x}%
\underset{k\geq 0}{\sum }\left \langle \frac{k+3}{2}\right \rangle _{n}\frac{%
x^{k}}{k!}, \\ 
&  \\ 
U_{n}\left( x\right) =\underset{k=0}{\overset{n}{\sum }}S_{-1/2,-1/2}\left(
n,k\right) x^{k}, & V_{n}\left( x\right) =\underset{k=0}{\overset{n}{\sum }}%
S_{-3/2,-1/2}\left( n,k\right) x^{k}, \\ 
&  \\ 
U_{n}\left( x\right) =\underset{j=0}{\overset{n}{\sum }}\left( \underset{k=j}%
{\overset{n}{\sum }}\frac{\left \vert s\left( n,k\right) \right \vert }{2^{k}%
}\right) \mathcal{B}_{j}\left( x\right) , & V_{n}\left( x\right) =\underset{%
j=0}{\overset{n}{\sum }}\frac{1}{3^{j}}\left( \underset{k=j}{\overset{n}{%
\sum }}\left \vert s\left( n,k\right) \right \vert \left( \frac{3}{2}\right)
^{k}\right) \mathcal{B}_{j}\left( x\right) , \\ 
&  \\ 
U_{n}\left( x\right) =\underset{k=0}{\overset{n}{\sum }}B_{n+1,k+1}^{\left(
1\right) }\left( \left \langle \frac{1}{2}\right \rangle _{j};\left \langle 
\frac{1}{2}\right \rangle _{j-1}\right) x^{k}, & V_{n}\left( x\right) =%
\underset{k=0}{\overset{n}{\sum }}B_{n+1,k+1}^{\left( 1\right) }\left( \left
\langle \frac{1}{2}\right \rangle _{j};\left \langle \frac{3}{2}\right
\rangle _{j-1}\right) x^{k}.%
\end{array}%
\end{equation*}%
Theorem \ref{T3} proves that the polynomials $U_{n}$ and $V_{n},$ $n\geq 1,$
have only real zeros and Theorem \ref{T4} shows that, for $x>0,$ there hold%
\begin{eqnarray*}
U_{n}\left( \frac{1}{\sqrt{x}}\right) &=&\left( -1\right) ^{n}x^{n}\sqrt{x}%
e^{-\frac{1}{\sqrt{x}}}\left( \frac{d}{dx}\right) ^{n}\left( \frac{1}{\sqrt{x%
}}e^{\frac{1}{\sqrt{x}}}\right) , \\
V_{n}\left( \frac{1}{\sqrt{x}}\right) &=&\left( -1\right) ^{n}x^{n+1}\sqrt{x}%
e^{-\frac{1}{\sqrt{x}}}\left( \frac{d}{dx}\right) ^{n}\left( \frac{1}{x\sqrt{%
x}}e^{\frac{1}{\sqrt{x}}}\right)
\end{eqnarray*}%
and%
\begin{eqnarray*}
U_{n}\left( \sqrt{x}\right) &=&\sqrt{x}e^{-\sqrt{x}}\left( \frac{d}{dx}%
\right) ^{n}\left( x^{n-1}\sqrt{x}e^{\sqrt{x}}\right) , \\
V_{n}\left( \sqrt{x}\right) &=&\sqrt{x}e^{-x\sqrt{x}}\left( \frac{d}{dx}%
\right) ^{n}\left( x^{n-1}\sqrt{x}e^{x\sqrt{x}}\right) .
\end{eqnarray*}

\subsection{Application to the generalized Laguerre polynomials}

We note here that the sequence of generalized Laguerre polynomials $\left(
L_{n}^{\left( \lambda \right) }\left( x\right) \right) $ (see for example 
\cite{boy,djo,ras}) defined by%
\begin{equation*}
\underset{n\geq 0}{\sum }L_{n}^{\left( \lambda \right) }\left( x\right)
t^{n}=\left( 1-t\right) ^{-\lambda -1}\exp \left( -\frac{xt}{1-t}\right)
\end{equation*}%
presents a particular case of the sequence $\left( P_{n}^{\left( \alpha
,\beta \right) }\left( x\right) \right) ,$ i.e. $L_{n}^{\left( \lambda
\right) }\left( x\right) =\frac{1}{n!}P_{n}^{\left( -\lambda -1,-1\right)
}\left( x\right) .$\newline
Propositions \ref{P1}, \ref{P2} and \ref{P3} give%
\begin{eqnarray*}
L_{n}^{\left( \lambda \right) }\left( x\right) &=&\frac{e^{-x}}{n!}\underset{%
k\geq 0}{\sum }\left \langle \lambda +1+k\right \rangle _{n}\frac{x^{k}}{k!},
\\
L_{n}^{\left( \lambda \right) }\left( x\right) &=&\frac{1}{n!}\underset{k=0}{%
\overset{n}{\sum }}S_{-\lambda -1,-1}\left( n,k\right) x^{k}, \\
L_{n}^{\left( \lambda \right) }\left( x\right) &=&\frac{1}{n!}\underset{j=0}{%
\overset{n}{\sum }}\left( \underset{k=j}{\overset{n}{\sum }}\left \vert
s\left( n,k\right) \right \vert \left( \lambda +1\right) ^{k-j}\right) 
\mathcal{B}_{j}\left( x\right) , \\
L_{n}^{\left( \lambda \right) }\left( x\right) &=&\frac{1}{n!}\underset{k=0}{%
\overset{n}{\sum }}B_{n+1,k+1}^{\left( 1\right) }\left( \left \langle
1\right \rangle _{j};\left \langle \lambda +1\right \rangle _{j-1}\right)
x^{k}.
\end{eqnarray*}%
To write $P_{n}^{\left( \alpha ,\beta \right) }\left( x\right) $ in the
basis $\left \{ 1,L_{1}^{\left( \lambda \right) }\left( x\right) ,\ldots
,L_{n}^{\left( \lambda \right) }\left( x\right) \right \} ,$ set $\left(
\alpha ^{\prime },\beta ^{\prime }\right) =\left( -\lambda -1,-1\right) $ in
Proposition \ref{P4} to obtain%
\begin{equation*}
P_{n}^{\left( \alpha ,\beta \right) }\left( x\right) =\underset{j=0}{\overset%
{n}{\sum }}\left( -1\right) ^{j}j!S_{\alpha -\left( \lambda +1\right) \beta
,-\beta }\left( n,j\right) L_{j}^{\left( \lambda \right) }\left( x\right) .
\end{equation*}%
Theorem \ref{T3} proves the known property on the generalized Laguerre
polynomials $L_{n}^{\left( \lambda \right) },$ $n\geq 1,$ that have only
real zeros (here for $\lambda \geq -2$), for more information about the real
zeros of Laguerre polynomials see for example \cite{dim}. Theorem \ref{T4}
shows that, for $x>0,$ there hold%
\begin{eqnarray*}
L_{n}^{\left( \lambda \right) }\left( \frac{1}{x}\right) &=&\frac{\left(
-1\right) ^{n}}{n!}x^{n+1+\lambda }e^{-\frac{1}{x}}\left( \frac{d}{dx}%
\right) ^{n}\left( x^{-\lambda -1}e^{\frac{1}{x}}\right) , \\
L_{n}^{\left( \lambda \right) }\left( x\right) &=&\frac{x^{-\lambda }e^{-x}}{%
n!}\left( \frac{d}{dx}\right) ^{n}\left( x^{n+\lambda }e^{x}\right) .
\end{eqnarray*}%
We remark that for $\lambda =2r-1$ be a positive odd integer, we obtain%
\begin{equation*}
L_{n}^{\left( 2r-1\right) }\left( \frac{1}{x}\right) =\frac{\left( -1\right)
^{n}}{n!}x^{n+2r}e^{-\frac{1}{x}}\left( \frac{d}{dx}\right) ^{n}\left( \frac{%
1}{x^{2r}}e^{\frac{1}{x}}\right) =\frac{1}{n!}\underset{k=0}{\overset{n}{%
\sum }}\frac{L_{r}\left( n+r,k+r\right) }{x^{k}},
\end{equation*}%
where $L_{r}\left( n,k\right) $ is the $\left( n,k\right) $-th $r$-Lah
number, see \cite{boy,mih5,nyu}.

\subsection{Application to the associated Lah polynomials}

Let $m$ be a positive integer. The sequence of the associated Lah
polynomials $\left( \mathcal{L}_{n}^{\left( m\right) }\left( x\right)
\right) $ are studied in \cite{ahu,nan} and are defined by%
\begin{equation*}
\underset{n\geq 0}{\sum }\mathcal{L}_{n}^{\left( m\right) }\left( x\right) 
\frac{t^{n}}{n!}=\exp \left( x\left( \left( 1-t\right) ^{-m}-1\right)
\right) .
\end{equation*}%
This shows that $\mathcal{L}_{n}^{\left( m\right) }\left( x\right)
=P_{n}^{\left( 0,-m\right) }\left( x\right) .$ Propositions \ref{P1}, \ref%
{P2} and \ref{P3} give%
\begin{equation*}
\begin{array}{ll}
\mathcal{L}_{n}^{\left( m\right) }\left( x\right) =e^{-x}\underset{k\geq 0}{%
\sum }\left \langle mk\right \rangle _{n}\frac{x^{k}}{k!}, & \mathcal{L}%
_{n}^{\left( m\right) }\left( x\right) =\underset{k=0}{\overset{n}{\sum }}%
S_{0,-m}\left( n,k\right) x^{k}, \\ 
\mathcal{L}_{n}^{\left( m\right) }\left( x\right) =\underset{j=0}{\overset{n}%
{\sum }}m^{j}\left \vert s\left( n,j\right) \right \vert \mathcal{B}%
_{j}\left( x\right) , & \mathcal{L}_{n}^{\left( m\right) }\left( x\right) =%
\underset{k=0}{\overset{n}{\sum }}B_{n,k}\left( \left \langle m\right
\rangle _{j}\right) x^{k}.%
\end{array}%
\end{equation*}%
To write $P_{n}^{\left( \alpha ,\beta \right) }\left( x\right) $ in the
basis $\left \{ 1,\mathcal{L}_{1}^{\left( m\right) }\left( x\right) ,\ldots ,%
\mathcal{L}_{n}^{\left( m\right) }\left( x\right) \right \} ,$ set $\left(
\alpha ^{\prime },\beta ^{\prime }\right) =\left( 0,-m\right) $ in
Proposition \ref{P4} to obtain 
\begin{equation*}
P_{n}^{\left( \alpha ,\beta \right) }\left( x\right) =\underset{j=0}{\overset%
{n}{\sum }}\left( -1\right) ^{j}S_{\alpha ,-\frac{\beta }{m}}\left(
n,j\right) \mathcal{L}_{j}^{\left( m\right) }\left( x\right) .
\end{equation*}%
Theorem \ref{T3} proves a known property of the associated Lah polynomials $%
\mathcal{L}_{n}^{\left( m\right) },$ $n\geq 1,$ that have only real zeros
and Theorem \ref{T4} shows that, for $x>0,$ there hold%
\begin{eqnarray*}
\mathcal{L}_{n}^{\left( m\right) }\left( \frac{1}{x^{m}}\right) &=&\left(
-1\right) ^{n}x^{n}e^{-1/x^{m}}\left( \frac{d}{dx}\right) ^{n}\left(
e^{1/x^{m}}\right) , \\
\mathcal{L}_{n}^{\left( m\right) }\left( x^{m}\right) &=&xe^{-x^{m}}\left( 
\frac{d}{dx}\right) ^{n}\left( x^{n-1}e^{x^{m}}\right) .
\end{eqnarray*}

\end{document}